\documentclass[12pt,reqno]{amsart}

\usepackage{amsmath,amssymb,amsthm,mathtools}
\usepackage[colorlinks=true,allcolors=blue,hypertexnames=false]{hyperref}
\usepackage{microtype}
\hypersetup{pdftitle={Algebraically Integrable Smooth Convex Bodies},pdfauthor={Junjie Shan}}

\numberwithin{equation}{section}

\newtheorem{theorem}{Theorem}[section]
\newtheorem{proposition}[theorem]{Proposition}
\newtheorem{lemma}[theorem]{Lemma}
\newtheorem{corollary}[theorem]{Corollary}
\theoremstyle{definition}
\newtheorem{definition}[theorem]{Definition}
\theoremstyle{remark}
\newtheorem{remark}[theorem]{Remark}

\newcommand{\R}{\mathbb R}
\newcommand{\C}{\mathbb C}
\newcommand{\Q}{\mathbb Q}
\newcommand{\Pj}{\mathbb P}
\newcommand{\Mcal}{\mathcal M}
\newcommand{\Dcal}{\mathcal D}
\newcommand{\Zcal}{\mathcal Z}
\newcommand{\Sph}{S^{n-1}}
\DeclareMathOperator{\vol}{vol}
\DeclareMathOperator{\Sing}{Sing}
\DeclareMathOperator{\Span}{span}
\DeclareMathOperator{\Res}{Res}

\title[Algebraically integrable convex bodies]{Algebraically Integrable Smooth Convex Bodies}
\author{Junjie Shan}
\address{School of Mathematics, Sichuan University, Chengdu 610064, Sichuan, China}
\email{shanjjmath@163.com}
\date{}
\subjclass[2020]{52A20, 14D05, 32S40, 44A12}
\keywords{algebraically integrable convex body, Arnold's problem, section volume, Picard--Lefschetz theory, semisimplicity}

\begin{document}

\begin{abstract}
We prove that a convex body with $C^\infty$ boundary in an
odd dimension $n\ge3$ is algebraically integrable if and only
if it is an ellipsoid. This answers Arnold's problem for
smooth convex bodies. Together with Vassiliev's
even-dimensional nonexistence theorem, it gives the
classification in every dimension $n\ge2$.
More generally, in odd dimensions, a convex body with
$C^\infty$ semialgebraic boundary is an ellipsoid whenever
its section volume function satisfies a joint algebraic relation.
\end{abstract}
\maketitle

\section{Introduction}

In \emph{Philosophiae Naturalis Principia Mathematica} (1687),
Newton studied the algebraic quadrature of ovals in connection
with celestial mechanics. His lemma on ovals
\cite[Book~I, Section~VI, Lemma~XXVIII]{Newton} states, in modern
terms, that the areas cut off from a smooth planar convex domain
by lines do not depend algebraically on the line parameters.
The higher-dimensional problem is to characterize bodies whose
volumes cut off by hyperplanes depend algebraically on the
hyperplane parameters. Such bodies are called \emph{algebraically
	integrable}. Ellipsoids in odd dimensions have this property.
Arnold \cite[Problem~1990-27]{ArnoldProblems} asked whether they
are the only smooth convex bodies with this property.
For background and related questions, see
\cite{ABKVY2023,Vassiliev2002}.

For a convex body $K\subset\R^n$ and $\xi\in\Sph$, set
\begin{equation}\label{eq:support-interval}
	a_K(\xi)=\min_{x\in K}x\cdot\xi,
	\qquad b_K(\xi)=\max_{x\in K}x\cdot\xi,
	\qquad I_K(\xi)=(a_K(\xi),b_K(\xi)).
\end{equation}
The cutoff volume and the section volume are
\begin{align}
 V_K(\xi,t)&=\vol_n\bigl(K\cap\{x:x\cdot\xi\le t\}\bigr),\label{eq:cutoff}\\
 A_K(\xi,t)&=\vol_{n-1}\bigl(K\cap\{x:x\cdot\xi=t\}\bigr),\label{eq:section}
\end{align}
respectively, and satisfy $\partial_tV_K(\xi,t)=A_K(\xi,t)$ for $t\in I_K(\xi)$. Algebraic integrability concerns the cutoff volume, rather than the section volume. We use nonnormalized hyperplane parameters in the following definition.

\begin{definition}\label{def:integrable}
	A convex body $K\subset\R^n$ is \emph{algebraically integrable}
	if there is a nonzero polynomial
	$P(\eta,s,w)\in\R[\eta_1,\ldots,\eta_n,s,w]$ such that
	\begin{equation}\label{eq:cutoff-algebraic}
		P\bigl(\eta,s,\vol_n(K\cap\{x:\eta\cdot x\le s\})\bigr)=0
	\end{equation}
	whenever $\eta\in\R^n\setminus\{0\}$ and the hyperplane
	$\eta\cdot x=s$ intersects the interior of $K$.
\end{definition}

A convex body $K$ is called \emph{polynomially integrable} if
$A_K(\xi,\cdot)$ agrees with a polynomial on $I_K(\xi)$ for
every $\xi\in\Sph$, with a degree bound independent of $\xi$.
For convex bodies with $C^\infty$ boundary in odd dimensions,
Agranovsky \cite{Agranovsky2018} obtained the ellipsoid
characterization under the degree bound $n-1$.
Koldobsky, Merkurjev, and Yaskin \cite{KMY} removed this
degree restriction.
For algebraic cutoff volumes, Agranovsky
\cite[Definition~1.4 and Theorem~1.5]{Agranovsky2019}
obtained the same characterization under his additional
condition that the cutoff volume function be
\emph{free of real singularities}.

Vassiliev \cite{Vassiliev2019} constructed smooth nonconvex
domains whose cutoff volumes depend algebraically on the
hyperplane parameters on certain open sets of cutting
hyperplanes, rather than requiring this property for all cuts.
He also constructed nonconvex domains whose cutoff volume
functions have finitely branched analytic continuations
\cite{Vassiliev2020}; their algebraicity was conjectured there.
Further obstructions to algebraic integrability arising from
ramification near asymptotic hyperplanes were obtained by
Artemov \cite{Artemov2026}.
For the boundary of a convex body, smooth always means $C^\infty$.

In this paper, we give a complete solution to Arnold's problem
on algebraically integrable smooth convex bodies.
Our main theorem establishes the ellipsoid characterization
in every odd dimension, without any additional restrictions
on the degree or form of the defining algebraic relation.

\begin{theorem}\label{thm:main}
	Let $n\ge3$ be odd, and let $K\subset\R^n$ be a convex body
	with $C^\infty$ boundary. Then $K$ is algebraically integrable
	if and only if $K$ is an ellipsoid.
\end{theorem}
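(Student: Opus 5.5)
The "if" direction is classical. For an ellipsoid $E=T(B)$ with $T$ affine, the section function is
\[
A_E(\xi,t)=c\,(b-a)^{-(n-1)}\bigl((t-a)(b-t)\bigr)^{(n-1)/2}.
\]
Since $n$ is odd, $(n-1)/2$ is an integer, so this is a polynomial in $t$. Its coefficients are algebraic in $\eta$, because $a$ and $b$ are $\pm\sqrt{\eta^{T}Q\eta}$ plus a linear term. Integrating in $t$ therefore gives a cutoff volume that is algebraic in $(\eta,s)$. The real work is the converse.

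For the converse, I will reduce to the polynomial integrability theorem of Koldobsky--Merkurjev--Yaskin \cite{KMY}. Its conclusion is that a smooth convex body in odd dimension with $A_K(\xi,\cdot)$ polynomial on $I_K(\xi)$, with degree uniformly bounded, is an ellipsoid. So the plan is to show that algebraic integrability forces $A_K(\xi,\cdot)$ to be a polynomial in $t$ of bounded degree.

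\textbf{Step 1 (finite monodromy).} Fix a generic $\xi$. Then $t\mapsto V_K(\xi,t)$ is the restriction to $I_K(\xi)$ of a branch of an algebraic function. Hence it continues analytically to $\C$ minus finitely many points, with finitely many branches, and the degree in $w$ of $P$ bounds the number of branches uniformly in $\xi$. Differentiating in $t$, the same holds for $A_K$.

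\textbf{Step 2 (local structure at the endpoints).} At a support value $t=b_K(\xi)$ the boundary is smooth and strictly convex near the tangency point. Strict convexity should follow from algebraicity, since flat pieces create non-algebraic behaviour; I will argue this separately. Near the tangency, the section is asymptotically an $(n-1)$-dimensional ellipsoid, which gives
\[
A_K(\xi,t)=(b-t)^{(n-1)/2}\,g(b-t),\qquad g \text{ a } C^\infty \text{ function}.
\]
Since $(n-1)/2$ is an integer, this is a Puiseux-free expansion, and algebraicity makes $g$ real-analytic at $0$. So $A_K(\xi,\cdot)$ extends holomorphically across both endpoints $a$ and $b$.

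\textbf{Step 3 (no other singularities; the main obstacle).} I must show that the analytic continuation of $A_K(\xi,\cdot)$ from $I_K(\xi)$ has no singularities in $\C$ at all, and has polynomial growth at $\infty$. Liouville's theorem then makes it a polynomial, with degree controlled by $P$.

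The tool I would use is Picard--Lefschetz theory for the family of hyperplane sections, following Vassiliev. The branches of the continued function correspond to integrals of the volume form over cycles in $H_{n-1}$ of the complexified sections, taken relative to the complexified boundary. Monodromy around a critical value acts by Picard--Lefschetz reflections. In odd $n$, the vanishing cycles at a Morse point give reflections of infinite order on the corresponding local system unless the relevant intersection pairings vanish.

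Finiteness of monodromy (Step 1) forces the monodromy group to be finite, and hence semisimple. I would use this to show that the real cycle, namely the actual section, is monodromy-invariant. Such invariance would mean $A_K$ is single-valued. Single-valuedness combined with the mild (algebraic) growth at the singular points, which are finitely many, then makes those singularities removable.

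The hardest part is handling a possibly non-algebraic but merely smooth boundary. Here I first need that algebraic integrability forces $\partial K$ to lie on a real algebraic hypersurface. To get this, I would recover the boundary from the support data: $a_K$ and $b_K$ are branch points of $V_K$, so they are algebraic functions of $\eta$, and the envelope of the support hyperplanes is then algebraic. This makes the Picard--Lefschetz machinery available. I expect the semisimplicity and invariance argument for the real cycle, which must exclude contributions of complex critical points of the boundary, to be the main obstacle.
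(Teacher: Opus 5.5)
Your overall architecture matches the paper: reduce to \cite{KMY} through a uniform polynomial degree bound, use the Picard--Lefschetz transvection at a Morse support point, and bound growth via the algebraic relation. The genuine gap is Step~3, which is where the whole difficulty lies.

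\textbf{The gap in Step~3.} Step~1 gives only finite branching of the \emph{period}: $A_K(\xi,\cdot)$ has finitely many germs under continuation. It does not make the monodromy group acting on homology finite, because classes with zero period may have infinite orbits. So ``finite, hence semisimple'' is unjustified. The semisimplicity actually available is Deligne's theorem for a smooth \emph{projective} family. It says nothing about the open affine fibers, or about homology of the complexified section relative to $X$, which is where your real cycle lives; that local system need not be semisimple. Combining the transvection formula, finite branching and Deligne gives only this: $\langle j(g\delta),v\rangle=0$ for all $g$, hence $T=\mathrm{Id}$ on compact homology, hence $v=j\delta=0$ in $H_d(M_\tau;\Q)$ of a smooth compactified fiber. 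That is not invariance of $\delta$ in the open fiber, and by itself it does not make the period single-valued. Section~\ref{sec:example} gives a compact-null vanishing cycle whose period is $\pi(\sqrt t-1)$. Moreover, even granting single-valuedness, ``single-valued with algebraic growth'' yields only meromorphy at the finite singular values. Poles would not be excluded.

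\textbf{The missing idea: control of the boundary at infinity.} The paper proceeds as follows.
\begin{itemize}
\item It resolves $\overline X$ so that $(\pi^{-1}H_\infty)_{\mathrm{red}}$ is simple normal crossing, and translates so that $0\notin X$.
\item It applies Bertini on the strata of $D_\infty$ to choose directions for which $\Dcal$ is relative simple normal crossing over \emph{every} $t\in\C$ (Lemma~\ref{lem:infinity-model}).
\item Mayer--Vietoris then lifts any class with $j\delta=0$ to the boundary of a tube around $\Dcal$. This tube boundary is a fiber bundle over all of $\C$, away from every interior singularity.
\item Stokes plus Morera then make the period entire (Proposition~\ref{prop:boundary-entire}).
\end{itemize}

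\textbf{Further gaps.}
\begin{itemize}
\item You also need the whole complex fiber over $b_K(\xi)$, not just the real support point, to contain exactly one critical point, and that point must be nondegenerate. The paper gets this from projective duality, excluding tangencies over $\Sing X$ through the duals of strata (Lemmas~\ref{lem:dual} and~\ref{lem:good-real-directions}). You flag this issue but give no argument.
\item Your claim ``strict convexity from algebraicity'' is unnecessary. Restrict to regular values of the Gauss map, then pass to all $\xi$ by continuity of $\partial_t^jA_K(\xi,0)$.
\item Your route to algebraicity of $\partial K$ fails as stated. In odd $n$, $a_K$ and $b_K$ are \emph{not} branch points of $V_K$, because $\vol_nK-V_K\sim c\,(b-t)^{(n+1)/2}$ with an integer exponent. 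For the ball in $\R^3$, $V_K$ is a cubic polynomial in $t$. What happens at the support values is that two distinct branches, $V_K$ and the constant $\vol_nK$, meet. A discriminant argument would therefore be needed; the paper simply cites \cite{Vassiliev2015}.
\end{itemize}
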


Vassiliev \cite[Theorem~1]{Vassiliev2015} proved that bounded
domains with smooth boundaries in even-dimensional Euclidean
spaces are not algebraically integrable.
Together with Theorem~\ref{thm:main}, this yields a complete
classification of algebraically integrable smooth convex bodies
in all dimensions.

\begin{corollary}\label{cor:all-dimensions}
	Let $n\ge2$, and let $K\subset\R^n$ be a convex body
	with $C^\infty$ boundary. Then $K$ is algebraically integrable
	if and only if $n$ is odd and $K$ is an ellipsoid.
\end{corollary}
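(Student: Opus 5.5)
The corollary follows by splitting on the parity of $n$ and invoking two results: Theorem~\ref{thm:main} for odd $n$, and Vassiliev's nonexistence theorem \cite[Theorem~1]{Vassiliev2015} for even $n$.

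\emph{Case $n$ odd.} Here $n\ge3$, since $n\ge2$ and $n$ is odd. Theorem~\ref{thm:main} applies verbatim: $K$ is algebraically integrable if and only if $K$ is an ellipsoid. This is exactly the claimed equivalence, because the condition ``$n$ odd'' holds automatically.

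\emph{Case $n$ even.} We must show that $K$ is never algebraically integrable, so that both sides of the equivalence are false. The interior of a convex body with $C^\infty$ boundary is a bounded domain with smooth boundary. We will apply \cite[Theorem~1]{Vassiliev2015} to this domain. That theorem asserts that the cutoff volume function of such a domain in even dimension is not algebraic.

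The main point to check is that Vassiliev's notion of algebraicity is implied by Definition~\ref{def:integrable}. Our definition uses nonnormalized parameters $(\eta,s)$ and requires the relation only for hyperplanes meeting the interior. Vassiliev's obstruction comes from the analytic continuation and ramification of the volume function near hyperplanes tangent to $\partial K$. Those hyperplanes are limits of hyperplanes meeting the interior, so any polynomial relation \eqref{eq:cutoff-algebraic} on this open set persists under analytic continuation. It therefore gives algebraicity in Vassiliev's sense, whether his version is stated for $\vol_n$ as a function of $(\eta,s)$, or of $(\xi,t)\in\Sph\times\R$ (obtained by restricting to $|\eta|=1$), or of the normalized coordinates. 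This yields a contradiction, so $K$ is not algebraically integrable.

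Combining the two cases proves the corollary. The only potential obstacle is the compatibility of definitions in the even case, and it reduces to the remark that a polynomial relation on an open connected set of cutting hyperplanes is the relevant hypothesis in both formulations.
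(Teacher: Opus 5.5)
Your proposal is correct and follows the paper's argument exactly: Theorem~\ref{thm:main} settles odd $n\ge3$, and Vassiliev's even-dimensional nonexistence theorem rules out even $n$. On compatibility of definitions, the paper uses a simpler device than your analytic-continuation remark (Section~\ref{sec:preliminaries}): the nonzero polynomial $w(w-\vol_nK)P(\eta,s,w)P(-\eta,-s,w)$ vanishes at both cutoff volumes of every hyperplane, including hyperplanes missing $\operatorname{int}K$, where the volumes are $0$ and $\vol_nK$ and continuation gives nothing, so Definition~\ref{def:integrable} yields Vassiliev's notion without appealing to how his proof works.
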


The smoothness assumption cannot be omitted: every full-dimensional convex polytope is algebraically integrable, as follows from the simplex volume formula in \cite[Theorem~2.2]{Lasserre} and triangulation.

We also obtain a characterization in terms of section volumes. Agranovsky \cite[Theorem~1.1]{Agranovsky2026} characterized smooth strictly convex bodies whose section functions satisfy a nontrivial equation of the form
\[
 q(\xi)A_K(\xi,t)^r+p(\xi,t)=0,
\]
where $r$ is a positive integer and $p,q$ are polynomials, with $q$ independent of $t$. In odd dimensions, we allow an arbitrary joint algebraic relation, under a semialgebraic boundary assumption.

A subset of a real Euclidean space is \emph{semialgebraic} if it is a finite union of sets defined by finitely many polynomial equalities and inequalities. We say that $A_K$ satisfies a \emph{joint algebraic relation} if there is a real polynomial $Q(\xi,t,w)$, not identically zero on $\Sph\times\R\times\R$, such that
\begin{equation}\label{eq:section-algebraic}
 Q\bigl(\xi,t,A_K(\xi,t)\bigr)=0
 \qquad \bigl(\xi\in\Sph,\ t\in I_K(\xi)\bigr).
\end{equation}
The nonvanishing condition excludes identities coming solely from $\sum_i\xi_i^2=1$. The coefficient of each power of $A_K$ may depend on both the direction and the intercept.

\begin{theorem}\label{thm:section}
Let $n\ge3$ be odd, and let $K\subset\R^n$ be a convex body with $C^\infty$ semialgebraic boundary. If $A_K$ satisfies a joint algebraic relation, then $K$ is an ellipsoid.
\end{theorem}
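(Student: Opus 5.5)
We will reduce Theorem~\ref{thm:section} to Theorem~\ref{thm:main}: we show that a joint algebraic relation for $A_K$, together with semialgebraicity of $\partial K$, forces $V_K$ to satisfy a polynomial relation as in Definition~\ref{def:integrable}. The main theorem then gives that $K$ is an ellipsoid.

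\emph{Step 1 (making $A_K$ algebraic).} Since $\partial K$ is semialgebraic, $K$ is semialgebraic. The set
\[
\{(\xi,t,x): \xi\in\Sph,\ x\in K,\ x\cdot\xi=t\}
\]
is semialgebraic, so $A_K$ is the fiber volume of a semialgebraic family. By itself this only puts $A_K$ in the class of constructible functions; such periods can be transcendental, for example logarithmic. So we need the hypothesis~\eqref{eq:section-algebraic}. Choose $Q$ of minimal $w$-degree among polynomials not vanishing identically on $\Sph\times\R\times\R$. Then, on the open connected set $U=\{(\xi,t):t\in I_K(\xi)\}$, the function $A_K$ is a real-analytic branch of an algebraic function. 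Real-analyticity of $A_K$ on $U$ holds for smooth strictly convex sections away from the support values; where strict convexity fails we argue generically. Extend $A_K$ homogeneously to nonnormalized parameters by
\[
A(\eta,s)=|\eta|^{-1}A_K\bigl(\eta/|\eta|,\,s/|\eta|\bigr).
\]
The factor $|\eta|=(\sum\eta_i^2)^{1/2}$ is algebraic, so $A$ is algebraic in $(\eta,s)$.

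\emph{Step 2 (integrating in $s$).} For fixed $\eta$ we have $\partial_sV(\eta,s)=|\eta|^{-1}\cdots$, i.e. $V$ is an antiderivative of an algebraic function of $s$, with $V=0$ at $s=a$. An antiderivative of an algebraic function need not be algebraic; it may acquire logarithmic terms. We must exclude these using odd $n$ and smoothness. My plan is to use the finiteness of monodromy that algebraicity of $A$ gives, together with the Picard--Lefschetz structure presumably developed in the paper for the main theorem. In odd $n$ the section volume is the derivative of a period with integer or half-integer local exponents near the support values.

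A logarithm in $V$ would appear as a nonzero residue $\Res$ of $A(\eta,\cdot)\,ds$ at some branch point. Summing over all branch points, including $\infty$, the residues vanish. Locally near the support values $s=a(\eta),b(\eta)$, the function $A$ behaves like $(s-a)^{(n-1)/2}$ times a smooth function, which contributes no residue. At the other, complex branch points, the residues are controlled by the same local exponent analysis applied to complex tangency points. If the paper's machinery from the proof of Theorem~\ref{thm:main} establishes that $V_K$ analytically continues as a period with monodromy in a finite group exactly when $A_K$ does, then the residues vanish and $V$ has finite monodromy. A function of finite-determination growth with finite monodromy is algebraic by the Riemann existence theorem: it is a multivalued function with moderate growth on the complement of an algebraic discriminant.

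\emph{Step 3.} Once $V$ is known to be algebraic, $K$ satisfies Definition~\ref{def:integrable}, and Theorem~\ref{thm:main} yields that $K$ is an ellipsoid.

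The main obstacle is Step 2, namely ruling out logarithmic periods when passing from $A$ to $V$. As a first approach, I would compute the monodromy of $V$ around each component of the discriminant. It equals the monodromy of $A$ plus a constant of integration, which is a linear combination of vanishing-cycle integrals. I would then show these constants vanish by the symmetry $V(\eta,s)+V(-\eta,-s)=\vol K$ combined with the odd-dimensional vanishing of Picard--Lefschetz invariants.
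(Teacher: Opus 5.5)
\textbf{There is a genuine gap.} Your reduction runs backwards relative to the paper, and the step it depends on is not established. In the paper, Theorem~\ref{thm:main} has no independent proof. It is deduced \emph{from} Theorem~\ref{thm:section} via Lemma~\ref{lem:algebraization}, which passes from a relation for $V_K$ to one for $A_K=\partial_tV_K$ by differentiating and taking a resultant. That is the easy direction. So invoking Theorem~\ref{thm:main} in Step~3 is circular, and your Step~2 asks for the hard converse (integration) on top of the full theorem.

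Even apart from circularity, Step~2 is only a plan. It is conditioned on ``if the paper's machinery establishes that $V_K$ has finite monodromy exactly when $A_K$ does'', which is the whole difficulty. The residue argument cannot close it.
\begin{itemize}
\item For fixed $\eta$, $V(\eta,\cdot)$ is an abelian integral of $A(\eta,s)\,ds$ on the compact Riemann surface $\mathcal R_\eta$ of $A(\eta,\cdot)$.
\item It is algebraic only if this differential is exact on $\mathcal R_\eta$ away from its poles (take the trace of an algebraic antiderivative). That requires all residues \emph{and} all periods over $H_1(\mathcal R_\eta)$ to vanish; elliptic-type periods already give infinite branching with no logarithms.
\item You address only residues, and the residue theorem gives one linear relation, not the vanishing of each residue.
\item The ``local exponent analysis at complex tangency points'' is unsupported. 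The other branch points of $A(\eta,\cdot)$, including $s=\infty$, may come from degenerate complex contacts or from $\Sing X$, where no exponent information is available.
\item The symmetry $V(\eta,s)+V(-\eta,-s)=\vol_nK$ is a real identity and says nothing about complex periods.
\item You would still need joint algebraicity in $(\eta,s)$, not just algebraicity for each fixed $\eta$.
\end{itemize}

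The missing idea is to avoid integration altogether and treat $A_K$ itself as a period.
\begin{enumerate}
\item For a dense set of directions $\xi$, the paper resolves the parallel pencil so that two things hold (Lemmas~\ref{lem:infinity-model} and~\ref{lem:good-real-directions}). The divisor at infinity is relatively simple normal crossing over all of $\C$, and the support fiber contains a single Morse point. Then $A_K(\xi,t)=\int_{\delta_t}\lambda_\xi$ for the vanishing cycle $\delta$.
\item The joint relation gives finite branching of this period.
\item The Picard--Lefschetz formula and Deligne's semisimplicity force $j\delta=0$ (Proposition~\ref{prop:semisimplicity}). Here $d=n-2$ is odd, so $\langle v,v\rangle=0$. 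This global argument is what spares the paper from analyzing the other critical values.
\item Hence $\delta$ can be represented near the divisor at infinity, and its period extends to an entire function (Proposition~\ref{prop:boundary-entire}).
\item The same relation bounds this entire function by $C_\xi(1+|t|)^{N_0}$, with $N_0$ independent of $\xi$. So $A_K(\xi,\cdot)$ is a polynomial of degree at most $N_0$.
\item Continuity extends this to all directions, and the Koldobsky--Merkurjev--Yaskin characterization of polynomially integrable bodies gives the ellipsoid.
\end{enumerate}
This is where odd dimension genuinely enters, and none of it appears in your proposal.
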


Lemma~\ref{lem:algebraization} shows that algebraic integrability supplies the hypotheses of Theorem~\ref{thm:section}. No assumption is made on the singularities of the complex algebraic closure of the boundary, and positive curvature is not required everywhere on the real boundary.

The proof combines algebraic geometry with the topology of complex
sections. Near a general support hyperplane, the real section boundary
is a vanishing cycle associated with a unique nondegenerate critical
point. The section volume is a nonzero period of this cycle, and the
Picard--Lefschetz formula describes its monodromy. Following the intersection-theoretic obstruction in Vassiliev \cite{Vassiliev2018}, we combine finite branching with Deligne's semisimplicity theorem \cite{Deligne1971}. Proposition~\ref{prop:semisimplicity} shows that the cycle maps to zero in the middle homology of a smooth projective compactification.

The original cycle need not be zero in the open fiber. Our compactification is chosen so that, for a general direction, its boundary remains regular over every finite complex parameter. We can then represent the cycle near the divisor at infinity. Proposition~\ref{prop:boundary-entire} extends its period to an entire function. The joint algebraic equation gives a polynomial degree bound independent of the direction, and the characterization of polynomially integrable bodies in \cite{KMY} yields the conclusion.

\section{Preliminaries}\label{sec:preliminaries}

\subsection{Notation and terminology}
Throughout the paper, $n\ge2$. We use $x\cdot y$ and $|x|$ for the Euclidean inner product and norm, and write $S^{n-1}=\{\xi\in\R^n:|\xi|=1\}$.  When used with complex coordinates, the dot product denotes
the complex bilinear extension
$z\cdot w=\sum_i z_iw_i$, without complex conjugation. 
For $j\ge1$, $\vol_j$ denotes $j$-dimensional Euclidean volume in the relevant affine subspace. We write $B_2^j$ for the Euclidean unit ball in $\R^j$ and $\omega_j=\vol_j(B_2^j)$. A convex body is a compact convex set $K\subset\R^n$ with
nonempty interior. We denote the interior of $K$ by
$\operatorname{int}K$. Background on convex bodies and their sections can be found in \cite{Gardner,Schneider}.

We use the classical language of algebraic varieties. For background
on complex algebraic geometry, see Harris \cite{Harris} and
Hartshorne \cite{Hartshorne}; for real algebraic and semialgebraic
geometry, see Bochnak, Coste, and Roy \cite{BCR}. Unless real points
are explicitly specified, algebraic varieties are over $\C$.

An \emph{affine algebraic set} $Y\subset\C^N$ is the common zero
set of finitely many complex polynomials. These are the closed sets
of the \emph{Zariski topology} on $\C^N$. A subset $Z\subset Y$ is
\emph{algebraic} if it is the common zero set in $Y$ of finitely many
polynomials on $\C^N$. It is proper if $Z\ne Y$.

Complex projective space $\Pj^N$ consists of the classes
$[z_0:\cdots:z_N]$ of nonzero vectors in $\C^{N+1}$ under
multiplication by a common nonzero complex scalar. The standard
affine charts $\{z_j\ne0\}$ are identified with $\C^N$ by the ratios
$z_i/z_j$, $i\ne j$. A \emph{projective algebraic set} is the common
zero set in $\Pj^N$ of finitely many homogeneous polynomials. These
sets define the closed sets of the Zariski topology on $\Pj^N$.
We call affine and projective algebraic sets and their Zariski open
subsets \emph{algebraic varieties}.

Each variety carries the induced Zariski topology. An
\emph{algebraic subset} of a variety means a Zariski closed subset
of that variety.  A \emph{locally closed} subset is the intersection
of a Zariski open and a Zariski closed subset of the same variety.

A variety $Y$ is \emph{irreducible} if it is nonempty and cannot
be written as the union of two proper Zariski closed subsets
of $Y$. The \emph{irreducible components} of a variety $Y$ are its
maximal irreducible Zariski closed subsets. There are finitely
many such components, and their union is $Y$. The \emph{Zariski
	closure} of a subset $S\subset Y$ is the smallest Zariski closed
subset of $Y$ containing $S$. The subset $S$ is \emph{Zariski dense}
in $Y$ if this closure equals $Y$. When parameters range over an
irreducible algebraic variety $Y$, we say that a property holds for
a \emph{general} parameter if there is a proper Zariski closed
subset $B\subsetneq Y$ such that the property holds for every
parameter in $Y\setminus B$.

A point $p$ of a complex variety $Y$ is \emph{smooth} if $Y$ is a
complex submanifold near $p$. The smooth locus is denoted by
$Y_{\mathrm{reg}}$, and the \emph{singular locus}
$\Sing Y=Y\setminus Y_{\mathrm{reg}}$ is Zariski closed in $Y$.
The dimension of a nonempty complex variety is the maximum of the
complex manifold dimensions of the smooth loci of its irreducible
components. Every nonempty proper algebraic subset of an
irreducible variety has smaller dimension. 

A nonconstant polynomial $F\in\C[z_1,\ldots,z_N]$ is
\emph{irreducible} if it cannot be written as the product
of two nonconstant polynomials in $\C[z_1,\ldots,z_N]$.
For such a polynomial, the zero set
$X=\{z\in\C^N:F(z)=0\}$ has complex dimension $N-1$.
A point $p\in X$ is smooth if and only if $dF_p\ne0$,
that is, at least one of the partial derivatives
$\partial F/\partial z_j$ is nonzero at $p$.

A function on a variety is \emph{regular} if, in affine charts,
it is locally a quotient of polynomials whose denominator is
nowhere zero on the chosen Zariski neighborhood. Let $X$ and $Y$
be algebraic varieties. A map $f:X\to Y$ is a \emph{morphism}
if, locally in the Zariski topology, it is given by regular
coordinate functions in affine charts on the source and target.
An \emph{isomorphism} is a bijective morphism whose inverse
is also a morphism. For an irreducible affine variety $Y$, a rational
function is represented by $p/q$, where $p$ and $q$ are polynomial
functions on $Y$ and $q$ is not identically zero on $Y$.

Let $Y$ and $T$ be algebraic varieties, and let $f:Y\to T$ be a
morphism. For $t\in T$, its \emph{fiber} over $t$ is
$f^{-1}(t)=\{p\in Y:f(p)=t\}$. If $Y$ and $T$ are smooth,
the differential at $p\in Y$ is
$df_p:T_pY\to T_{f(p)}T$. The morphism is \emph{smooth} if $df_p$ is surjective for
every $p\in Y$; equivalently, its associated holomorphic map is a
\emph{submersion}. In this setting, a point where $df_p$ is not
surjective is a \emph{critical point}, and its image is a
\emph{critical value}.

A variety is \emph{projective} if it is isomorphic to a Zariski
closed subset of some $\Pj^N$. Let $Y$ and $T$ be algebraic
varieties. A morphism $f:Y\to T$ is \emph{projective} if there exist
an integer $N\ge0$ and an isomorphism $\iota$ from $Y$ onto a
Zariski closed subset of $T\times\Pj^N$ such that
$f=\operatorname{pr}_T\circ\iota$, where
$\operatorname{pr}_T(t,[z])=t$.

For compactness, homology, homotopy, and analytic continuation, we
use the usual topology induced by the Euclidean topology in affine
charts, not the Zariski topology. A projective morphism is
\emph{proper}: inverse images of compact sets are compact.
In particular, projective varieties and the fibers of projective
morphisms are compact. A \emph{compactification} of a variety $Y$
is a compact algebraic variety containing $Y$ as a Zariski open
dense subset.

For a subset $S\subset\R^N$, its real Zariski closure is the
common zero set in $\R^N$ of all real polynomials vanishing on $S$.
If $S$ is nonempty and semialgebraic, it admits a finite partition
into smooth semialgebraic submanifolds. Its dimension is the largest
of their real dimensions. This equals
the dimension of its real Zariski closure
\cite[Proposition~2.8.2]{BCR}. Projections of semialgebraic sets are
semialgebraic by the Tarski--Seidenberg theorem
\cite[Theorem~2.2.1]{BCR}. A map between real semialgebraic sets is
\emph{semialgebraic} if its graph is semialgebraic. A real analytic
submanifold $M\subset\R^N$ is a \emph{Nash submanifold} if $M$
is semialgebraic. A map between Nash submanifolds is
\emph{Nash} if it is real analytic and semialgebraic.
Lemma~\ref{lem:nash-boundary} recalls that if a convex body
$K$ has $C^\infty$ semialgebraic boundary, then $\partial K$
is real analytic.

For a topological space $Y$, we write $H_j(Y;\Q)$ and $H^j(Y;\Q)$
for singular homology and cohomology with rational coefficients.
A cycle is a finite singular chain with zero boundary. In local
holomorphic coordinates $z_1,\ldots,z_m$, a holomorphic $k$-form
is a sum of terms $a(z)\,dz_{i_1}\wedge\cdots\wedge dz_{i_k}$
with holomorphic coefficients $a$. We integrate such forms over
real $k$-dimensional cycles, using piecewise smooth representatives.
 We use the Mayer--Vietoris sequence for an open
cover and the intersection pairing supplied by Poincar\'e duality
on a compact oriented manifold without boundary; see \cite{Hatcher}.

\subsection{Algebraic boundary and section functions}

Let $P(\eta,s,w)$ be the polynomial in
Definition~\ref{def:integrable}, where $w$ is the volume
variable. For a hyperplane $\eta\cdot x=s$ meeting
$\operatorname{int}K$, the two cutoff volumes
\[
\vol_n\bigl(K\cap\{x:\eta\cdot x\le s\}\bigr),
\qquad
\vol_n\bigl(K\cap\{x:\eta\cdot x\ge s\}\bigr)
\]
satisfy $P(\eta,s,w)=0$ and $P(-\eta,-s,w)=0$,
respectively. Hence the nonzero polynomial
\[
w(w-\vol_nK)P(\eta,s,w)P(-\eta,-s,w)
\]
vanishes at both volumes. For a hyperplane not meeting
$\operatorname{int}K$, the two volumes are $0$ and
$\vol_nK$, so the factor $w(w-\vol_nK)$ gives the same
conclusion. Thus one polynomial equation holds for both
cutoff volumes of every hyperplane. Conversely, such an
equation satisfies Definition~\ref{def:integrable}.

The next two lemmas pass from the volume relation to a complex algebraic hypersurface containing the real boundary.

\begin{lemma}\label{lem:algebraization}
If a convex body $K$ with $C^\infty$ boundary is algebraically integrable, then $K$ and $\partial K$ are semialgebraic. Moreover, $A_K$ satisfies a joint algebraic relation.
\end{lemma}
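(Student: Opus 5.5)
The plan is to show first that the cutoff volume is a semialgebraic function of the hyperplane parameters, and then to read off everything else by Tarski--Seidenberg and dimension counting. Let $U\subset(\R^n\setminus\{0\})\times\R$ be the set of $(\eta,s)$ such that the hyperplane $\eta\cdot x=s$ meets $\operatorname{int}K$. This set is open and connected, and it is semialgebraic once $K$ is known to be semialgebraic. To avoid circularity, I would instead work on the set $\{(\eta,s):0<V<\vol_nK\}$, which is the same set, and argue through the root structure of $P$ directly. On $U$ the function $V(\eta,s)=\vol_n(K\cap\{\eta\cdot x\le s\})$ is continuous. Smoothness of $\partial K$ makes it real analytic in $(\eta,s)$ away from tangency, and $V$ is positively homogeneous of degree $0$. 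Since $U$ is connected and $V$ is analytic, some irreducible factor of $P$ vanishes identically on the graph of $V$. Replacing $P$ by this factor, we may assume $P$ is irreducible.

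Next I would let $Z\subset\R^{n+1}$ be the proper real algebraic set where the leading coefficient of $P$ in $w$ vanishes or the discriminant $\Res_w(P,\partial_wP)$ vanishes. The discriminant is not identically zero because $P$ is irreducible, hence squarefree in $w$. On each connected component $C$ of $U\setminus Z$ (there are finitely many, each semialgebraic), the real roots of $P(\eta,s,\cdot)$ are distinct, finite in number, and vary continuously. The continuous function $V$ must therefore coincide on $C$ with the $k$-th real root for a fixed $k=k(C)$. The set $\{(\eta,s,w):(\eta,s)\in C,\ w\text{ is the }k\text{-th real root}\}$ is semialgebraic by Tarski--Seidenberg. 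Hence the graph of $V$ over $U\setminus Z$ is semialgebraic. Since $U\setminus Z$ is dense in $U$ and $V$ is continuous, the full graph is the closure of this set intersected with $U\times\R$, which is again semialgebraic. The delicate point, which I expect to be the main obstacle, is making this argument independent of already knowing that $U$ is semialgebraic. I would handle it by running the argument on the whole parameter space, extending $V$ to all $(\eta,s)$ with values in $[0,\vol_nK]$. This extension is continuous, since the cutoff volume is continuous in the hyperplane, and it satisfies the modified polynomial $w(w-\vol_nK)P(\eta,s,w)P(-\eta,-s,w)$ described above. The root-selection argument then applies on all of $(\R^n\setminus\{0\})\times\R$, with $V$ possibly equal to different roots or to one of the factors' roots on different components.

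With $V$ semialgebraic, the rest is routine. The support function is $b_K(\xi)=\inf\{s: V(\xi,s)=\vol_nK\}$, which is semialgebraic, and
\[
K=\{x:\ x\cdot\xi\le b_K(\xi)\ \text{for all }\xi\in\Sph\}.
\]
This set is semialgebraic by Tarski--Seidenberg, so $\operatorname{int}K$ and $\partial K=K\setminus\operatorname{int}K$ are semialgebraic as well. The section function $A_K(\xi,t)=\partial_tV_K(\xi,t)$ is the derivative of a semialgebraic function, and its graph is first-order definable from that of $V$, so it is semialgebraic on the semialgebraic set $D=\{(\xi,t):\xi\in\Sph,\ t\in I_K(\xi)\}$.

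Finally, the graph $\Gamma$ of $A_K$ over $D$ is a semialgebraic set of dimension $n$, namely $\dim D=(n-1)+1$. By \cite[Proposition~2.8.2]{BCR}, its real Zariski closure has dimension $n$. On the other hand, $\Sph\times\R\times\R$ is an irreducible real algebraic set of dimension $n+1$. Therefore the ideal of $\Gamma$ strictly contains the ideal generated by $\sum_i\xi_i^2-1$. Any polynomial $Q$ in the difference vanishes on $\Gamma$ but not identically on $\Sph\times\R\times\R$. This $Q$ gives the joint algebraic relation \eqref{eq:section-algebraic}.
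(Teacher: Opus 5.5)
Your argument is correct in substance, but it takes a genuinely different route from the paper.

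\textbf{What the paper does.} It never shows that $V_K$ itself is semialgebraic. It cites \cite[Remark~1(2)]{Vassiliev2015} for $\partial K$, and gets $K$ as the image of $\partial K\times\partial K\times[0,1]$ under $(x,y,u)\mapsto(1-u)x+uy$. It then constructs $Q$ explicitly:
\begin{itemize}
\item homogeneity in $(\eta,s)$ replaces $P$ by a coefficient $P_j$ that does not vanish identically on $\Sph\times\R\times\R$;
\item it passes to the square-free part $B$ over the function field of $\Sph\times\R$ and differentiates $B(\xi,t,V_K)=0$ in $t$;
\item it takes $Q=\Res_w(B,B_wz+B_t)$, whose top coefficient $\Res_w(B,B_w)$ is nonzero;
\item continuity of $A_K$ then removes the exceptional set.
\end{itemize}

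\textbf{What you do instead.} You prove semialgebraicity of $V$ by root selection on the components of the complement of the discriminant and leading-coefficient locus. You recover $K$ and $\partial K$ from the support function and obtain $A_K$ as a definable derivative. You then get $Q$ nonconstructively from $\dim\Gamma=n<n+1$ and \cite[Proposition~2.8.2]{BCR}.

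\textbf{Comparison.} Your route is self-contained, with no appeal to Vassiliev. It sidesteps the possibility that $P$ vanishes on $|\eta|=1$, and it uses no smoothness of $\partial K$. The paper's route yields an explicit $Q$ whose degree is controlled by $P$, although that control is not used later.

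\textbf{A step that needs repair.} Real analyticity of $V$ does not follow from $C^\infty$ smoothness of $\partial K$. It holds only a posteriori, once semialgebraicity is known. So the reduction to a single irreducible factor in your first paragraph is unjustified: a continuous solution can switch between roots of different factors, as $|x|$ does for $(w-x)(w+x)$.

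Your whole-parameter-space version does not need that reduction. There, however, you still appeal to irreducibility to get a nonzero discriminant. The product $w(w-\vol_nK)P(\eta,s,w)P(-\eta,-s,w)$ need not be square-free, since the two copies of $P$ can share factors, and it may contain factors free of $w$. The fix is short:
\begin{itemize}
\item discard the $w$-free content, which preserves the relation by density and continuity of $V$;
\item replace what remains by the product of its distinct irreducible factors.
\end{itemize}
This polynomial has a nonzero discriminant and the same zeros, and your root-selection argument then goes through as written.
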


\begin{proof}
	The boundary is semialgebraic by
	\cite[Remark~1(2)]{Vassiliev2015}.
	Every point of $K$ lies on a segment with endpoints in
	$\partial K$, and every such segment is contained in $K$.
	Thus $K$ is the image of the semialgebraic set
	$\partial K\times\partial K\times[0,1]$ under the polynomial
	map $(x,y,u)\mapsto(1-u)x+uy$. It is therefore semialgebraic
	by \cite[Proposition~2.2.7]{BCR}.
	
	It remains to deduce from the algebraic relation for $V_K$
	a polynomial relation for its derivative $A_K=\partial_tV_K$.
	Let $P$ satisfy \eqref{eq:cutoff-algebraic}. Although $P$
	is nonzero, its restriction to $|\eta|=1$ might be
	identically zero. We  choose another polynomial relation
	to avoid this.
	
	Expand
	\[
	P(c\eta,cs,w)=\sum_{j=0}^D c^jP_j(\eta,s,w)
	\]
	as a polynomial in $c$. For $\xi\in\Sph$ and
	$t\in I_K(\xi)$, multiplying the inequality
	$x\cdot\xi\le t$ by $c>0$ leaves the cutoff volume
	unchanged. Thus
	\[
	\sum_{j=0}^D c^jP_j(\xi,t,V_K(\xi,t))=0
	\qquad\text{for all }c>0.
	\]
	Each coefficient is therefore zero, so
	$P_j(\xi,t,V_K(\xi,t))=0$ for every $j$.
	At least one $P_j$ is not identically zero on
	$\Sph\times\R\times\R$. Otherwise,
	$P(c\xi,ct,w)=0$ for all $c>0$, $\xi\in\Sph$,
	and $t,w\in\R$, contradicting $P\ne0$. We replace $P$ by such a $P_j$.
	
	We regard $P(\xi,t,w)$ as a polynomial in $w$. Its
	coefficients are polynomial functions of $(\xi,t)$ restricted
	to $\Sph\times\R$. We work over the fraction field
	\[
	\mathbb F
	=\operatorname{Frac}\!\left(
	\R[\xi_1,\ldots,\xi_n,t]\Big/
	\Big(\sum_{i=1}^n\xi_i^2-1\Big)
	\right).
	\]
	The quotient ring is an integral domain, since
	$\sum_{i=1}^n\xi_i^2-1$ is irreducible for $n\ge2$.
	Thus $\mathbb F$ consists of ratios of polynomial functions
	on $\Sph\times\R$.
	
	The degree of $P$ in $w$ is positive. Otherwise, for each
	fixed $\xi\in\Sph$, the polynomial $P(\xi,t)$ in $t$
	would vanish on the nonempty interval $I_K(\xi)$ and hence
	for every $t\in\R$. This contradicts the choice of $P$.
	
	Factor $P$ in $\mathbb F[w]$ as
	\[
	P=a\,f_1^{m_1}\cdots f_r^{m_r},
	\]
	where $a\in\mathbb F\setminus\{0\}$, the integers $m_i$
	are positive, and $f_1,\ldots,f_r$ are distinct monic
	irreducible polynomials of positive degree in $w$. Define
	\[
	B(\xi,t,w)=f_1(\xi,t,w)\cdots f_r(\xi,t,w).
	\]
	Thus $B$ is monic and square-free.
	
	Let $E\subset\Sph\times\R$ be the union of the zero sets
	of the denominators in $a$ and in the coefficients of the
	$f_i$, together with the zero set of the numerator of $a$.
	None of these polynomial functions vanishes identically on
	$\Sph\times\R$, so $E$ is a proper algebraic subset.
	For $(\xi,t)\notin E$ with $t\in I_K(\xi)$, the identity
	\[
	0=P(\xi,t,V_K(\xi,t))
	=a(\xi,t)\prod_{i=1}^r
	f_i(\xi,t,V_K(\xi,t))^{m_i}
	\]
	implies $B(\xi,t,V_K(\xi,t))=0$.
	This identity holds on an open set of internal parameters.
	Differentiating with respect to $t$, with $\xi$ fixed, gives
	\[
	B_w(\xi,t,V_K(\xi,t))A_K(\xi,t)
	+B_t(\xi,t,V_K(\xi,t))=0.
	\]
	
	Introduce an independent variable $z$ and define
	\[
	Q(\xi,t,z)
	=\Res_w\!\left(B,B_wz+B_t\right)\in\mathbb F[z],
	\]
	where $\Res_w$ denotes the resultant with respect to $w$.
	The coefficients of $B$, $B_w$, and $B_t$ are defined
	outside $E$, and the resultant is a polynomial in these
	coefficients. For $z=A_K(\xi,t)$, its two arguments have
	the common root $w=V_K(\xi,t)$. Hence
	\[
	Q(\xi,t,A_K(\xi,t))=0
	\]
	for $(\xi,t)\notin E$ with $t\in I_K(\xi)$.
	
	This relation is nontrivial. Since $\mathbb F$ has
	characteristic zero and $B$ is square-free, $B$ and $B_w$
	are relatively prime. Therefore $\Res_w(B,B_w)\ne0$.
	If $d=\deg_w B$, the coefficient of $z^d$ in $Q$ is
	$\Res_w(B,B_w)$, because $B$ is monic.
	Thus $Q\ne0$ in $\mathbb F[z]$.
	
	Multiplying $Q$ by a common denominator of its coefficients
	gives a real polynomial in $(\xi,t,z)$ whose restriction
	to $\Sph\times\R\times\R$ is not identically zero.
	This polynomial vanishes at $z=A_K(\xi,t)$ for internal
	parameters outside $E$.
	The set $E$ has empty interior, since a nonzero polynomial
	function on the connected real analytic manifold
	$\Sph\times\R$ cannot vanish on a nonempty open set.
	Its complement is therefore dense.
	Every internal hyperplane meets $\partial K$ transversely,
	since a tangent hyperplane of a smooth convex body is
	supporting. Consequently, $A_K$ is continuous on the open
	set of internal parameters, and the polynomial relation
	extends to all $\xi\in\Sph$ and $t\in I_K(\xi)$.
	This proves the last assertion.
\end{proof}

\begin{lemma}\label{lem:nash-boundary}
Let $K\subset\R^n$ be a convex body with $C^\infty$ semialgebraic boundary. Then $\partial K$ is real analytic. There is a real polynomial $F$, irreducible over $\C$, such that
\[
 \partial K\subset X:=\{F=0\}\subset\C^n,
\]
and $\partial K$ is Zariski dense in $X$.
\end{lemma}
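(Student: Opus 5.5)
The plan is to combine two standard facts. A $C^\infty$ semialgebraic submanifold is Nash, hence real analytic. An irreducible real analytic connected hypersurface that is semialgebraic has an irreducible real Zariski closure of dimension $n-1$, whose complexification gives $X$.

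\emph{Analyticity.} Since $\partial K$ is a compact $C^\infty$ hypersurface that is semialgebraic, it is a $C^\infty$ semialgebraic submanifold of $\R^n$. By \cite[Proposition~8.1.8]{BCR} (a $C^\infty$ semialgebraic function is Nash), it is a Nash submanifold. Locally, $\partial K$ is the graph of a $C^\infty$ function over a tangent hyperplane. This function is semialgebraic by Tarski--Seidenberg, hence Nash and in particular real analytic. So $\partial K$ is real analytic.

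\emph{Zariski closure.} The semialgebraic set $\partial K$ has dimension $n-1$. By \cite[Proposition~2.8.2]{BCR}, its real Zariski closure $Z\subset\R^n$ also has dimension $n-1$. I claim $Z$ is irreducible. Suppose $Z=Z_1\cup Z_2$ with proper algebraic $Z_i$. Then $\partial K=(\partial K\cap Z_1)\cup(\partial K\cap Z_2)$. The boundary $\partial K$ is a connected real analytic manifold, since it is homeomorphic to $\Sph$ and $n\ge2$. On it, the zero set of a real polynomial either has empty interior or is all of $\partial K$, by the identity theorem for analytic functions on a connected analytic manifold. If both $\partial K\cap Z_i$ had empty interior, their union, being closed, would be nowhere dense (Baire), which is a contradiction. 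So some $Z_i\supset\partial K$, contradicting minimality of $Z$.

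The ideal $I(Z)\subset\R[x]$ is therefore prime of height one, since $\dim Z=n-1$ and real Zariski dimension matches Krull dimension. As $\R[x]$ is a UFD, $I(Z)=(F)$ for an irreducible real polynomial $F$.

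\emph{Irreducibility over $\C$.} This is the main obstacle. Suppose $F=G\bar G$ over $\C$, up to a real unit, with $G$ not a real multiple of a real polynomial. Then real points of $\{F=0\}$ lie in $\{G=0\}\cap\{\bar G=0\}$, that is, in the zero set of the real polynomials $\operatorname{Re}G$ and $\operatorname{Im}G$. Since $G$ and $\bar G$ are coprime, this set has complex dimension at most $n-2$, so real dimension at most $n-2$. This contradicts $\dim Z=n-1$. Hence $F$ is irreducible over $\C$.

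\emph{Density.} Set $X=\{F=0\}\subset\C^n$, which is irreducible of dimension $n-1$. Any complex polynomial $g$ vanishing on $\partial K$ has real and imaginary parts (coefficientwise) vanishing on $\partial K$, hence on $Z$. Both are therefore divisible by $F$ in $\R[x]$, so $F\mid g$ and $g$ vanishes on $X$. Thus $\partial K$ is Zariski dense in $X$.
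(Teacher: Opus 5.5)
Your proof is correct. Its skeleton matches the paper's: local graphs are Nash and hence analytic, the real Zariski closure is irreducible of dimension $n-1$ so its ideal is $(F)$, and complex polynomials vanishing on $\partial K$ are divisible by $F$. Two steps are argued differently.

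\emph{Real irreducibility.} The paper cites Bochnak--Coste--Roy, Proposition~8.4.1(i), for irreducibility of the real Zariski closure. You prove it directly from the identity theorem on the connected analytic manifold $\partial K$.

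\emph{Irreducibility over $\C$.} The paper never factors $F$. It notes that the ideal of complex polynomials vanishing on $\partial K$ is $F\,\C[x]$, by the same real/imaginary-part argument you use for density. It then shows this ideal is prime by the identity theorem again: if $GH=0$ on $\partial K$ and $G\not\equiv0$ there, then $H$ vanishes on a nonempty open subset of $\partial K$, hence on all of it. You instead prove a purely algebraic criterion: a real-irreducible polynomial whose real zero set has dimension $n-1$ stays irreducible over $\C$. You do this via $F=cG\bar G$ and a dimension count for $V(G)\cap V(\bar G)$.

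The paper's route is shorter and reuses one analytic tool. Yours isolates a more general statement that does not depend on the analytic structure of $\partial K$. However, it leans on two standard facts you assert without proof:
\begin{enumerate}
\item A real-irreducible, complex-reducible $F$ has the form $cG\bar G$ with $G,\bar G$ coprime. If an irreducible complex factor $G$ were associate to $\bar G$, a unimodular multiple of $G$ would be a real divisor of $F$, hence associate to $F$.
\item The real common zero set of $\operatorname{Re}G$ and $\operatorname{Im}G$ has dimension at most $n-2$. Its dimension is bounded by the Krull dimension of $\R[x]/(\operatorname{Re}G,\operatorname{Im}G)$, which equals that of its complexification $\C[x]/(G,\bar G)$, and the latter is at most $n-2$ by coprimality.
\end{enumerate}
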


\begin{proof}
Local graph representations of $\partial K$ are $C^\infty$
semialgebraic functions, hence real analytic by
\cite[Definition~2.9.3 and Proposition~8.1.8]{BCR}.
Thus $\partial K$ is a connected Nash hypersurface.
Its real Zariski closure is irreducible of dimension $n-1$
by \cite[Proposition~8.4.1(i)]{BCR}.
The ideal $I\subset\R[x_1,\ldots,x_n]$ of polynomials
vanishing on $\partial K$ is therefore prime of height one.
Since $\R[x_1,\ldots,x_n]$ is a unique factorization domain,
$I=(F)$ for a real irreducible polynomial $F$.
	
For $G\in\C[x_1,\ldots,x_n]$, write $G=G_1+iG_2$
with $G_1,G_2\in\R[x_1,\ldots,x_n]$.
If $G$ vanishes on $\partial K$, then both $G_1$ and $G_2$
vanish there and are therefore divisible by $F$.
Hence $F$ divides $G$ in $\C[x_1,\ldots,x_n]$, so the ideal
of complex polynomials vanishing on $\partial K$ is $(F)$.
	
	This complex ideal is prime. To see this, suppose that
	$GH$ vanishes on $\partial K$ and that $G$ does not
	vanish identically there. Then $G\ne0$ on a nonempty
	relatively open subset of $\partial K$, so $H=0$ on
	that subset. The real analytic identity theorem on the
	connected manifold $\partial K$ gives $H=0$ on all of
	$\partial K$. Thus $(F)$ is prime over $\C$, and $F$
	is irreducible over $\C$.
Every complex polynomial vanishing on $\partial K$ is
divisible by $F$ and hence vanishes on $X$.
Thus $\partial K$ is Zariski dense in $X$.
\end{proof}

The hypersurface $X$ in Lemma~\ref{lem:nash-boundary} is the smallest complex algebraic set containing $\partial K$. The real points of $X$ are precisely the real solutions of
$F=0$. They contain $\partial K$, but need not coincide
with it. For the unit ball, $X$ is the complex quadric
$z_1^2+\cdots+z_n^2=1$, whose real points form the unit
sphere, although $X$ also contains nonreal points.
In general, the smoothness of $\partial K$ does not imply
that the complex hypersurface $X$ is nonsingular.

\begin{lemma}\label{lem:real-analytic-sections}
For a convex body $K\subset\R^n$ with real analytic boundary and a fixed $\xi\in\Sph$, the function $A_K(\xi,\cdot)$ is real analytic on $I_K(\xi)$. If $\partial K$ is only $C^\infty$ and $0\in\operatorname{int}K$, then $A_K(\xi,t)$ is $C^\infty$ as a function of $(\xi,t)$
in a neighborhood of $\Sph\times\{0\}$.
\end{lemma}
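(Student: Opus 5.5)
The plan is to write the section volume as an integral of a smooth or analytic function over a fixed domain, using coordinates in which the section of $\partial K$ becomes a graph over a sphere. For an interior parameter $(\xi_0,t_0)$, I pick a point $p\in\operatorname{int}K$ with $p\cdot\xi_0=t_0$. Then I parametrize the section $K\cap\{x\cdot\xi=t\}$ radially from the point $p(\xi,t)=p+(t-\xi\cdot p)\xi/|\xi|^2$, suitably adjusted to lie in the hyperplane. Let $e_1(\xi),\dots,e_{n-1}(\xi)$ be an orthonormal frame of $\xi^\perp$, chosen real analytically in $\xi$ near $\xi_0$ (for instance by Gram--Schmidt applied to fixed vectors). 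For $\theta\in S^{n-2}$ set $u(\theta,\xi)=\sum\theta_ie_i(\xi)$. The radial function $\rho(\xi,t,\theta)>0$ is defined by
\[
 p(\xi,t)+\rho\,u(\theta,\xi)\in\partial K .
\]
Polar coordinates then give
\[
 A_K(\xi,t)=\frac1{n-1}\int_{S^{n-2}}\rho(\xi,t,\theta)^{n-1}\,d\theta .
\]

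The key step is the regularity of $\rho$. Near $\partial K$ I write $\partial K=\{g=0\}$ with $g$ a local defining function, which is real analytic in the first case and $C^\infty$ in the second. Because $p(\xi,t)$ is interior, the ray from it meets $\partial K$ exactly once. The intersection is transversal: the ray leaves the body, and for a convex body with smooth boundary the outward normal $\nu$ satisfies $\nu\cdot(x-p)>0$ at any boundary point $x$ when $p$ is interior, since otherwise the supporting hyperplane at $x$ would contain or separate $p$. Hence $\partial_\rho g\neq0$. The implicit function theorem, in its real analytic version in the first case and its $C^\infty$ version in the second, gives $\rho$ with the same regularity jointly in $(\xi,t,\theta)$ on a neighborhood of $\{(\xi_0,t_0)\}\times S^{n-2}$, using compactness of $S^{n-2}$ to cover it by finitely many local patches.

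For the first claim I fix $\xi$ and use real analyticity in $t$. The integrand is real analytic in $(t,\theta)$ on a neighborhood of $\{t_0\}\times S^{n-2}$, so it extends holomorphically in $t$ to a complex neighborhood uniformly in $\theta$. Integrating over the compact sphere then yields a holomorphic function of $t$, so $A_K(\xi,\cdot)$ is analytic near every $t_0\in I_K(\xi)$. For the second claim I take $p=0$ and $t$ near $0$. Since $0\in\operatorname{int}K$, every $(\xi,0)$ with $\xi\in\Sph$ is an interior parameter, and the base point $t\xi$ stays interior for small $|t|$, uniformly in $\xi$ by compactness. Differentiation under the integral sign then gives $C^\infty$ dependence on $(\xi,t)$ near $\Sph\times\{0\}$.

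I expect the main obstacle to be the bookkeeping of the frame $e_i(\xi)$. A global smooth frame on the sphere does not exist, but only local ones are needed, and $A_K$ does not depend on the choice of frame. So I would work locally in $\xi$ and patch, which is harmless. The transversality argument above is the one genuinely geometric point to check carefully.
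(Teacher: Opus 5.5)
Your proof is correct, but it takes a different route from the paper. The paper works with the section boundaries $C_t=\partial K\cap\{x\cdot\xi=t\}$ themselves. In the analytic case it flows along the normalized tangential gradient $\nabla_{\partial K}x_n/|\nabla_{\partial K}x_n|^2$, getting analyticity of the flow from Picard iteration. In the $C^\infty$ case it uses an Ehresmann trivialization of the proper submersion $\mathcal E\to\Sph\times(-\varepsilon,\varepsilon)$. In both cases it writes $A_K(\xi,t)=\int_{C_t}\beta$ by Stokes' theorem and pulls back to a fixed section boundary.

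You instead use convexity more heavily. Each section is star-shaped about a nearby interior point, so it is described by a radial function over the fixed sphere $S^{n-2}$. The inequality $\nu\cdot(x-p)>0$ then lets the analytic or smooth implicit function theorem give the regularity of $\rho$ directly.

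Your route is more elementary: it needs no flows, no Ehresmann theorem and no Stokes' theorem. The domain of integration and its measure are literally fixed. In the analytic case, with the Gram--Schmidt frame, it even yields joint real analyticity in $(\xi,t)$, which the paper does not claim.

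The paper's route uses convexity only for the transversality of internal sections, so it applies to any transverse family of sections. It also presents $A_K$ as the integral of a primitive $(n-2)$-form over the section boundary. That is exactly the form in which the section volume reappears later as the vanishing-cycle period $\int_{\delta_t}\lambda_\xi$ in \eqref{eq:real-period}.

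One point deserves an explicit sentence in your write-up: passing from local holomorphic extensions of $\rho(\cdot,\theta)$ on finitely many $\theta$-patches to one extension on a common disk. For each fixed real $\theta$ the local extensions agree by the identity theorem in $t$. They therefore glue to a function holomorphic in $t$ and continuous in $(t,\theta)$, which can be integrated over $S^{n-2}$.
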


\begin{proof}
A hyperplane meeting $\operatorname{int}K$
	cannot be tangent to $\partial K$. Equivalently, the
	differential of $x\mapsto x\cdot\xi$ is nonzero on
	$T_x\partial K$ at every point of an internal section
	boundary. Thus all internal sections are transverse.
	
	First assume that $\partial K$ is real analytic. Fix
	$\xi\in\Sph$ and $t_0\in I_K(\xi)$, and choose orthonormal
	coordinates with $\xi=e_n$. Put
	\[
	C_t=\partial K\cap\{x_n=t\}.
	\]
	Let $\nabla_{\partial K}$ denote the tangential gradient
	for the induced Euclidean metric. On the open subset $\{x\in\partial K:x_n\in I_K(\xi)\}$, the vector field
	\[
	Y=\frac{\nabla_{\partial K}x_n}
	{|\nabla_{\partial K}x_n|^2}
	\]
	is well defined and real analytic, and satisfies $dx_n(Y)=1$.
	Let $\Phi_s$ be the local flow of $Y$, characterized by
	$\partial_s\Phi_s(p)=Y(\Phi_s(p))$ and $\Phi_0(p)=p$.
	In local analytic coordinates, the coefficients of $Y$
	extend holomorphically to a complex neighborhood.
	The Picard iteration argument in
	\cite[proof of Theorem~4.2]{Teschl}, applied to the resulting
	system, shows that $(s,p)\mapsto\Phi_s(p)$ is real analytic.
	
	Choose a closed interval about $t_0$ contained in $I_K(\xi)$.
	The part of $\partial K$ over this interval is compact,
	so the flow is defined there for uniformly small positive
	and negative times. Since $dx_n(Y)=1$,
	\(
	\frac{d}{ds}x_n(\Phi_s(x))=1,
	\)
	and hence $x_n(\Phi_s(x))=x_n(x)+s$.
	Therefore $\Phi_s$ maps $C_{t_0}$ onto $C_{t_0+s}$,
	with inverse $\Phi_{-s}$, for sufficiently small $|s|$.
	
	Orient each section hyperplane by
	$dx_1\wedge\cdots\wedge dx_{n-1}$, and give $C_t$ the
	induced boundary orientation. Consider the polynomial
	$(n-2)$-form
	\[
	\beta=x_1\,dx_2\wedge\cdots\wedge dx_{n-1},
	\qquad
	d\beta=dx_1\wedge\cdots\wedge dx_{n-1},
	\]
	with $\beta=x_1$ when $n=2$.
	Stokes' theorem \cite[Theorem~16.11]{Lee} gives
	\[
	A_K(\xi,t)
	=\int_{C_t}\beta
	=\int_{C_{t_0}}\Phi_{t-t_0}^{*}\beta
	\]
	for $t$ sufficiently close to $t_0$; the flow preserves
	the boundary orientations for small times.
	
	This last integral is over a fixed compact manifold.
	Choose a finite analytic coordinate cover of $C_{t_0}$
	and a subordinate smooth partition of unity, independent
	of $t$, with compact supports in the coordinate charts.
	After shrinking the neighborhoods, the coefficients of
	the pulled-back forms extend holomorphically in $t$ to
	a common complex disk about $t_0$ and are uniformly bounded
	on the supports of the partition of unity.
	Integration in the fixed real coordinates therefore gives
	a holomorphic function of $t$. Its restriction to real $t$
	is $A_K(\xi,t)$, proving real analyticity near $t_0$.
	Since $t_0$ was arbitrary, this holds on $I_K(\xi)$.
	
	Now assume only that $\partial K$ is $C^\infty$ and that
	$0\in\operatorname{int}K$. Choose $\varepsilon>0$ with
	$\varepsilon B_2^n\subset\operatorname{int}K$, and set
	\[
	\mathcal E=
	\{(x,\xi,t)\in\partial K\times\Sph\times(-\varepsilon,\varepsilon):
	x\cdot\xi=t\}.
	\]
Since $\partial_t(x\cdot\xi-t)=-1$, $\mathcal E$ is a
smooth manifold. Consider the projection
	$\pi(x,\xi,t)=(\xi,t)$.
	Fix $(x,\xi,t)\in\mathcal E$ and take any
	$(v,a)\in T_\xi\Sph\times\R$.
	A vector $(h,v,a)$ is tangent to $\mathcal E$ precisely
	when $h\in T_x\partial K$ and
	$\xi\cdot h+x\cdot v=a$.
	By transversality, the linear map
	$h\mapsto\xi\cdot h$ from $T_x\partial K$ to $\R$
	is nonzero and hence surjective. Thus such an $h$ exists
	for every $(v,a)$.
	Since $d\pi_{(x,\xi,t)}(h,v,a)=(v,a)$, the differential
	of $\pi$ is surjective. Hence $\pi$ is a submersion.
	It is surjective because $|t|<\varepsilon$ gives an internal
	section, and proper because $\partial K$ is compact.
	Ehresmann's theorem \cite[Lemma~9.2]{KolarMichorSlovak}
	therefore supplies smooth local trivializations of this
	family of section boundaries.

	Locally in $\xi$, choose smoothly varying orthonormal
	coordinates on $\xi^\perp$, translate them to
	$t\xi+\xi^\perp$, and define $\beta_\xi$ as above.
	Over a sufficiently small parameter neighborhood, the
	preceding trivialization can be chosen to give a smooth
	family of orientation-preserving diffeomorphisms
	$\psi_{\xi,t}:C_*\to\partial K\cap\{x:x\cdot\xi=t\}$,
	where $C_*$ is a fixed section boundary.
	By Stokes' theorem and change of variables,
	$A_K(\xi,t)=\int_{C_*}\psi_{\xi,t}^*\beta_\xi$.
	The pulled-back form depends smoothly on $(\xi,t)$, and
	compactness of $C_*$ allows repeated differentiation
	under the integral. Since this argument applies near every
	parameter, $A_K$ is $C^\infty$ on
	$\Sph\times(-\varepsilon,\varepsilon)$.
\end{proof}

\section{A resolved parallel pencil}\label{sec:pencil}

For the odd-dimensional part of the proof, let $m\ge1$ be an
integer and write
\begin{equation}\label{eq:dimensions}
	n=2m+1,\qquad d=n-2=2m-1.
\end{equation}
Let $K\subset\R^n$ be a convex body with $C^\infty$
semialgebraic boundary, and let $F$ and $X=\{F=0\}$ be as in
Lemma~\ref{lem:nash-boundary}. An internal real section of $K$
has dimension $d+1$, and its boundary has real dimension $d$.
A smooth complex hyperplane section of $X$ has complex
dimension $d$, hence real dimension $2d$. This explains the
homology degree $d$ used below.

Since a nonzero polynomial cannot vanish on an open subset of
$\R^n$, there is a point $c\in\operatorname{int}K$ with
$F(c)\ne0$. Translation by $-c$ replaces $t$ with
$t+\xi\cdot c$ in the cutoff and section functions.
It therefore preserves algebraic integrability and the joint
algebraic relation \eqref{eq:section-algebraic}.
Translating $K$ and $X$ together and keeping the same notation,
we may assume
\begin{equation}\label{eq:origin}
	0\in\operatorname{int}K,\qquad 0\notin X.
\end{equation}
For a fixed direction $\xi$, the hyperplanes $\xi\cdot x=t$
form a \emph{parallel pencil}. We construct a smooth total
space for the corresponding sections after resolution.
The fibers may still have singularities in the affine part.
The essential point is that the map is smooth near the added
boundary for every finite complex parameter, as required in
Proposition~\ref{prop:boundary-entire}.

\subsection{Resolution and the boundary at infinity}

A morphism between irreducible varieties is \emph{birational}
if it restricts to an isomorphism between dense open subsets.
We use a \emph{resolution} in the form of a projective
birational morphism from a smooth irreducible variety.
A reduced divisor on a smooth variety is a hypersurface with
each irreducible component counted once; the notation
$(\ )_{\mathrm{red}}$ removes multiplicities. A divisor has
\emph{simple normal crossings} if its components are smooth
and, in local holomorphic coordinates, their union is given
by $u_1\cdots u_r=0$. Its smooth locally closed strata are obtained by intersecting
chosen components and removing the points lying on the others.
We also use algebraic stratifications adapted to morphisms;
see \cite{GoreskyMacPherson}.

Embed $\C^n$ as the chart $z_0\ne0$ in $\Pj^n$, with affine
coordinates $x_i=z_i/z_0$. Let $\overline X\subset\Pj^n$ be
the projective closure of $X$, and put $H_\infty=\{z_0=0\}$.
First resolve $\overline X$, and then take an embedded
resolution of the inverse image of $H_\infty$ in the
resulting smooth variety.
By \cite[Theorems~1.1 and~1.3]{BierstoneMilman}, this gives a
projective birational morphism
\[
\pi:M\longrightarrow\overline X
\]
with $M$ smooth irreducible projective of dimension $d+1$ and
\[
D_\infty=(\pi^{-1}H_\infty)_{\mathrm{red}}
\]
a simple normal-crossing divisor. The first resolution is an
isomorphism over the smooth locus of $\overline X$. The second
uses blowings-up with centers over $H_\infty$, so $\pi$ remains
an isomorphism over $X_{\mathrm{reg}}$.
We work on $M\setminus D_\infty=\pi^{-1}(X)$.
The set $\pi^{-1}(\Sing X)$, called the \emph{affine exceptional
	set}, is retained, as are its intersections with the sections.

We now express the hyperplane equations on $M$. An algebraic
line bundle is locally a product with $\C$, with linear
changes of fiber coordinate given by nowhere-vanishing regular
functions. Its sections are therefore represented locally by
regular functions with these changes of trivialization.
The hyperplane line bundle $\mathcal O_{\Pj^n}(1)$ has
homogeneous linear forms as its global sections. On the chart
$z_j\ne0$, the section defined by a linear form $\ell$ is
represented by $\ell/z_j$; see
\cite[Chapter~II, Section~7]{Hartshorne}.

Set
\[
L=\pi^*\bigl(\mathcal O_{\Pj^n}(1)|_{\overline X}\bigr),
\qquad s_0=\pi^*z_0.
\]
Here pullback means that the fiber of $L$ at $p\in M$ is the
fiber of the original line bundle at $\pi(p)$, and local
representatives of sections are composed with $\pi$.
For this algebraic construction, allow
$\xi\in\C^n\setminus\{0\}$ and put
\[
\ell(z)=\sum_{i=1}^n\xi_i z_i,\qquad s_1=\pi^*\ell.
\]
On $\pi^{-1}(\{z_j\ne0\})$, the local representatives of
$s_0$ and $s_1$ are $(z_0/z_j)\circ\pi$ and
$(\ell/z_j)\circ\pi$, respectively. They change by the same
nowhere-vanishing factor, so $s_1(p)=t s_0(p)$ defines a
global equation on $M$ for each $t\in\C$.
On $M\setminus D_\infty$, taking $j=0$ gives the representatives
$1$ and $(\ell/z_0)\circ\pi$. Thus the equation becomes
$\ell(\pi(p))=t$, where $\ell(x)=\sum_i\xi_i x_i$ in affine
coordinates.

The section $s_0$ is fixed, while $s_1$ varies in the linear
system spanned by $\pi^*z_1,\ldots,\pi^*z_n$. This system is
\emph{base-point-free} if its generating sections have no
common zero on $M$. Replacing $\xi$ by a nonzero scalar
multiple only rescales $t$. We therefore parametrize directions
by $[\xi]\in\Pj^{n-1}$ and fix a representative whenever $t$
is used. Real unit directions will be selected in
Lemma~\ref{lem:good-real-directions}.

For a morphism from a smooth $(d+1)$-dimensional variety to a
smooth curve, a reduced divisor is \emph{relative simple
	normal-crossing} if its irreducible components are smooth
and, near each of its points, there are holomorphic coordinates
$(t,u_1,\ldots,u_d)$ in which the morphism is projection to
a local coordinate $t$ on the curve and the divisor is
$u_1\cdots u_r=0$.
Thus the morphism is smooth near the divisor, but its fibers
may have singularities elsewhere.

\begin{lemma}\label{lem:infinity-model}

	There is a nonempty Zariski open set of directions
	$[\xi]\in\Pj^{n-1}$ such that
	\begin{equation}\label{eq:incidence}
		\Mcal=\{(p,t)\in M\times\C:s_1(p)=t s_0(p)\},
		\qquad f(p,t)=t,
	\end{equation}
	is smooth of dimension $d+1$, and $f:\Mcal\to\C$ is
	projective. The reduced boundary of $\Mcal$,
	\[
	\Dcal=\bigl(\Mcal\cap(D_\infty\times\C)\bigr)_{\mathrm{red}},
	\]
	is a relative simple normal-crossing divisor over all of $\C$.
	In particular, $f$ is smooth near $\Dcal$ for every finite
	complex parameter, and all intersections of components of
	$\Dcal$ are smooth over $\C$.
\end{lemma}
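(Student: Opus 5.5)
The idea is to apply Bertini's theorem to the strata of $D_\infty$, using the linear system pulled back from the hyperplane at infinity. Once that gives transversality of $s_1$ along every stratum, a local computation shows that $\Mcal$ is a graph near $\Dcal$, with $t$ as one of the coordinates. Note first that $\Mcal$ is Zariski closed in $M\times\C$. Since $M$ is projective, $M\times\C\to\C$ is projective, so $f$ is projective as its restriction to a closed subset. Over $M\setminus D_\infty$ we have $s_0\ne0$, and in the chart $z_0\ne0$ the defining equation reads $t=\ell(\pi(p))$. Hence $\Mcal\setminus(D_\infty\times\C)$ is the graph of the regular function $\ell\circ\pi$ on the smooth variety $M\setminus D_\infty$. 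It is therefore smooth of dimension $d+1$, with no condition on $\xi$.

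Now fix $p\in D_\infty$, so $s_0(p)=0$. If $s_1(p)\ne0$, then $s_1-ts_0$ is nonzero near $(p,t)$ for every finite $t$, so such points do not lie on $\Mcal$. Hence
\[
\Mcal\cap(D_\infty\times\C)=\bigl(D_\infty\cap\{s_1=0\}\bigr)\times\C .
\]
Let $E_1,\dots,E_k$ be the components of $D_\infty$ and $S$ any stratum. We have $\pi(S)\subset\overline X\cap H_\infty$. On $H_\infty$ the forms $z_1,\dots,z_n$ have no common zero, so the linear system $\{\pi^*\ell\}$ restricted to each closed stratum $E_I=\bigcap_{i\in I}E_i$ is base-point-free. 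By Bertini's theorem in characteristic zero, applied to each of the finitely many smooth varieties $E_I$, there is a nonempty Zariski open set of $[\xi]\in\Pj^{n-1}$ with the following property. For every $I$, the zero set $\{s_1|_{E_I}=0\}$ is smooth of codimension one in $E_I$ (or empty), and $s_1|_{E_I}$ has nonzero differential there. Equivalently, $\{s_1=0\}$ is transverse to every stratum of $D_\infty$. This is the step I expect to be the main obstacle. One has to check that transversality to all strata, rather than merely smoothness of each $E_i\cap\{s_1=0\}$, is what the local computation needs, and that the multiplicities of $s_0$ along the $E_i$ do not interfere.

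For the local model, take $p\in D_\infty$ with $s_1(p)=0$, lying on exactly $E_1,\dots,E_r$. Choose local coordinates $u_1,\dots,u_{d+1}$ on $M$ with $E_i=\{u_i=0\}$ for $i\le r$, and trivialize $L$ near $p$. Then $s_0=e\prod_{i\le r}u_i^{a_i}$ with $e$ a unit and $a_i\ge1$, and $s_1=g$ with $g(p)=0$. Transversality says that $dg|_{E_1\cap\cdots\cap E_r}\ne0$ at $p$, so after a change of the remaining coordinates we may take $g=u_{r+1}$. The equation of $\Mcal$ becomes
\[
u_{r+1}=t\,e(u)\prod_{i\le r}u_i^{a_i}.
\]
Its derivative in $u_{r+1}$ is $1-t\,(\partial_{u_{r+1}}e)\prod u_i^{a_i}$, which is nonzero near $u_1=\dots=u_r=0$ for every finite $t$. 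By the implicit function theorem, near $(p,t_0)$ the variety $\Mcal$ is smooth of dimension $d+1$, with coordinates $(t,u_1,\dots,\widehat{u_{r+1}},\dots,u_{d+1})$. In these coordinates $f$ is the projection to $t$ and $\Dcal=\{u_1\cdots u_r=0\}$. This is exactly the relative simple normal-crossing condition, and it holds uniformly in $t_0\in\C$.

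Finally, the components of $\Dcal$ are the connected components of $(E_i\cap\{s_1=0\})\times\C$. The intersections of these components are $(E_I\cap\{s_1=0\})\times\C$, which are smooth and smooth over $\C$ by the Bertini step. This gives the remaining assertions of the lemma.
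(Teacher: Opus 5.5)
Your proposal is correct and follows the paper's proof. Both arguments apply Bertini to the strata of $D_\infty$ so that $s_1$ is transverse to each stratum, identify $\Dcal=(D_\infty\cap\{s_1=0\})\times\C$, and apply the implicit function theorem to $u_{r+1}=t\,e(u)\prod_{i\le r}u_i^{a_i}$, which yields coordinates $(t,u_1,\ldots,\widehat{u_{r+1}},\ldots,u_{d+1})$. The differences are minor: you get base-point-freeness on $D_\infty$ directly from $z_0=0$ on $H_\infty$, so $0\notin X$ is not needed here, and the paper states explicitly what your transversality condition gives implicitly, namely that $s_1$ is nonzero at zero-dimensional strata, so $r\le d$ and $u_{r+1}$ exists.
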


\begin{proof}
	The sections $\pi^*z_1,\ldots,\pi^*z_n$ have no common zero:
	such a zero would map to $[1:0:\cdots:0]$, the affine origin,
	which is excluded by \eqref{eq:origin}. Their restrictions to
	each stratum of $D_\infty$ are also base-point-free.
	Apply Bertini's theorem in characteristic zero to these
	restrictions, treating irreducible components separately when
	necessary; see \cite[Corollary~5]{Kleiman}.  For a general $s_1$, its restriction to each
	positive-dimensional stratum defines a smooth divisor or has
	no zeros. Thus, at each zero $p$ on a stratum $S$, the
	differential of a local representative satisfies
	$ds_1|_{T_pS}\ne0$.  We also choose $s_1$ nonzero at every
	zero-dimensional stratum. There are only finitely many
	strata, so these conditions hold on a common nonempty
	Zariski open set of directions. This form of Bertini requires
	no projective embedding by the chosen linear system.
	
On $M\setminus D_\infty$, the section $s_0$ does not vanish.
Thus $\Mcal\setminus\Dcal$ is the graph of the regular function
$s_1/s_0$ on $M\setminus D_\infty$, so $\Mcal$ is smooth
there. On $D_\infty\times\C$, the incidence equation
reduces to $s_1=0$; in particular, as reduced varieties,
	\[
	\Dcal=(D_\infty\cap\{s_1=0\})\times\C.
	\]
	We must also check the map in a neighborhood of this boundary.

	Let $(p,t_0)\in\Dcal$. Trivialize $L$ near $p$, using
	$s_0,s_1$ also for their local representatives, and choose
	normal-crossing coordinates for $D_\infty$, with the
	components through $p$ given by $u_1=0,\ldots,u_r=0$.
	The stratum $S$ through $p$ is locally
	$\{u_1=\cdots=u_r=0\}$. Since $s_1(p)=t_0s_0(p)=0$
	and $s_1$ was chosen nonzero on all zero-dimensional strata,
	$\dim S=d+1-r\ge1$, so $r\le d$.
	Since $ds_1(p)|_{T_pS}\ne0$, the covectors
	$du_1(p),\ldots,du_r(p),ds_1(p)$ are linearly independent.
	Keeping $u_1,\ldots,u_r$ unchanged, the holomorphic inverse
	function theorem gives coordinates $(y,u_1,\ldots,u_d)$
	with $y=s_1$. The boundary components are therefore still
	given by $u_i=0$, $1\le i\le r$.

	The zero divisor of $s_0$ is supported on $D_\infty$.
	Consequently, in this trivialization,
	\[
	s_0=g(y,u)u_1^{a_1}\cdots u_r^{a_r},\qquad s_1=y,
	\]
	where $u=(u_1,\ldots,u_d)$, $g$ is a nowhere-vanishing
	holomorphic function, and each positive integer $a_i$ is the
	vanishing order of $s_0$ along $u_i=0$.
	The incidence equation is therefore
	\begin{equation}\label{eq:local-model}
		y=t\,g(y,u)u_1^{a_1}\cdots u_r^{a_r}.
	\end{equation}
	At $(p,t_0)$, the product $u_1^{a_1}\cdots u_r^{a_r}$
	vanishes, so the derivative with respect to $y$ of the left
	side minus the right side is $1$.
	The holomorphic implicit function theorem solves uniquely
	for $y$ as a holomorphic function of $(t,u)$ near this point.
	Thus $(t,u_1,\ldots,u_d)$ are coordinates on $\Mcal$,
	$f$ is projection to $t$, and $\Dcal$ is given by
	$u_1\cdots u_r=0$. This proves the required local description at every finite
	$t_0$. For each irreducible component $E$ of $D_\infty$,
	the reduced intersection
	$(\Mcal\cap(E\times\C))_{\mathrm{red}}$ is locally given
	by a single coordinate equation $u_i=0$.
	It is therefore smooth.
	Consequently, the irreducible components of $\Dcal$ are smooth.
	Each nonempty intersection of distinct components is locally
	defined by setting some of $u_1,\ldots,u_r$ equal to zero.
	The restriction of $f$ is still projection to the free
	coordinate $t$, so every such intersection is smooth over $\C$.

Finally, \eqref{eq:incidence} defines a closed subvariety of
$M\times\C$. Since $M$ is projective, the projection
$M\times\C\to\C$ is projective. Its restriction
$f:\Mcal\to\C$ is therefore projective.
\end{proof}

Fix a direction supplied by Lemma~\ref{lem:infinity-model}. The open subset $\Mcal\setminus\Dcal$ is the graph of
$s_1/s_0$ over $M\setminus D_\infty$.
The first projection identifies it with
$M\setminus D_\infty$, so it is irreducible.
Near each point of $\Dcal$, the local coordinates above
identify $\Mcal\setminus\Dcal$ with
$\{u_1\cdots u_r\ne0\}$.
Thus every boundary point is a limit of points of this
open subset, which is therefore dense in $\Mcal$ in the
analytic topology, and hence also in the Zariski topology.
Its Zariski closure $\Mcal$ is consequently irreducible.

The linear function $\ell$ is nonconstant on $\partial K$:
otherwise it would be constant on $K$, which has nonempty
interior, contradicting $\xi\ne0$.
Since $\pi$ is surjective and $\partial K\subset X$,
we have $\ell(\partial K)\subset f(\Mcal)$, so $f$ is
nonconstant. As $\Mcal$ is irreducible, $f(\Mcal)$ is
Zariski dense in $\C$.
Since $f$ is projective, $f(\Mcal)$ is also Zariski closed,
so $f$ is surjective.

Since $\Mcal$ is smooth, generic smoothness in characteristic
zero gives a nonempty Zariski open subset of $\C$ over which
$f$ is smooth; see
\cite[Chapter~III, Corollary~10.7]{Hartshorne}.
The critical locus $\{q\in\Mcal:df_q=0\}$ is Zariski closed,
being the zero locus of the algebraic differential $df$.
Its image under the projective map $f$ is Zariski closed
and misses the nonempty open subset just obtained.
This image is therefore a proper algebraic subset of $\C$,
hence finite. Denote this set of critical values by $\Sigma$
and put
\begin{equation}\label{eq:base}
	U=\C\setminus\Sigma.
\end{equation}
In the local coordinates of Lemma~\ref{lem:infinity-model},
$f=t$ and $df=dt\ne0$ near $\Dcal$.
Thus all critical points lie outside $\Dcal$.

Write
\begin{equation}\label{eq:fibres}
	M_t=f^{-1}(t),\qquad D_t=M_t\cap\Dcal,
	\qquad Z_t=M_t\setminus D_t,\qquad
	\Zcal=\Mcal\setminus\Dcal.
\end{equation}
The first projection identifies $M_t$ with
$\{p\in M:s_1(p)=t s_0(p)\}$.
For $t\in U$, $M_t$ is a smooth projective variety of complex
dimension $d$, and $D_t$ is a simple normal-crossing divisor
in $M_t$. Thus $M_t$ compactifies $Z_t$ by adjoining $D_t$.
Under the same projection,
\[
Z_t=\{p\in M\setminus D_\infty:\ell(\pi(p))=t\}.
\]
In particular, $Z_t$ includes the points above
$\Sing X\cap\{\ell=t\}$; they are not part of $D_t$.

We next obtain local trivializations preserving the divisor.
Fix $t_*\in U$ and a small disk $B\subset U$ about $t_*$.
Since $f:f^{-1}(B)\to B$ is a proper surjective smooth
submersion, Ehresmann's theorem gives smooth local
trivializations \cite[Lemma~9.2]{KolarMichorSlovak}.
To preserve $\Dcal$, use the vector-field proof of
Ehresmann's theorem with the following additional choice.
A local lift of a smooth real vector field $V$ on $B$
is a smooth real vector field $\widetilde V$ on an open
subset of $f^{-1}(B)$ satisfying
$df(\widetilde V)=V\circ f$.
In relative normal-crossing coordinates, lift the two real
coordinate vector fields on $B$ by keeping the $u$-coordinates
fixed. These lifts are tangent to every boundary component.
Away from $\Dcal$, choose smooth lifts using the submersion
property. A smooth partition of unity on $f^{-1}(B)$ subordinate
to these neighborhoods combines the local lifts.
Since its coefficients sum to one, the resulting vector
fields still lift the corresponding coordinate vector
fields on $B$ and remain tangent to every boundary component.

Use these lifts to transport points along the straight
segments from $t_*$ to nearby points of $B$. 
After shrinking $B$ so that its closure lies in the original
disk, properness makes $f^{-1}(\overline B)$ compact.
Each lifted path stays in this compact set, so the
continuation theorem for ordinary differential equations
ensures existence along the whole segment, in either
direction.
Smooth dependence on the endpoint $t$ and the initial point
gives a smooth family of diffeomorphisms $M_{t_*}\to M_t$,
with inverses obtained by reversing the paths.
Tangency to each component of $\Dcal$ ensures that these
diffeomorphisms map $D_{t_*}$ onto $D_t$.
We obtain a smooth local trivialization of pairs
\[
\bigl(f^{-1}(B),\Dcal\cap f^{-1}(B)\bigr)
\simeq
\bigl(B\times M_{t_*},B\times D_{t_*}\bigr)
\]
compatible with projection to $B$.

These trivializations also identify the complements $Z_t$.
Consequently, $H_d(Z_t;\Q)$ and $H_d(M_t;\Q)$ form
\emph{local systems} on $U$, meaning locally constant
families of vector spaces. The induced isomorphisms on homology along paths depend
only on the homotopy class of the path in $U$ with fixed
endpoints and are called \emph{Gauss--Manin transport}.
Since the same trivializations are used for the compact
fibers and their complements, the maps
$H_d(Z_t;\Q)\to H_d(M_t;\Q)$ induced by the inclusions
$Z_t\hookrightarrow M_t$ commute with this transport.
The argument uses properness of $f$, not of the restriction
$\Zcal\cap f^{-1}(U)\to U$, which need not be proper.

\subsection{A single ordinary tangency}

We now choose real directions $\xi$ for which the fiber
$M_{b_K(\xi)}$, where $b_K(\xi)$ is defined in
\eqref{eq:support-interval}, contains exactly one critical point
of $f$, and this point is nondegenerate.
A nondegenerate real support point alone does not exclude other
complex critical points in the same fiber. We first control
contacts on the smooth locus of $\overline X$, then exclude
critical points over $\Sing X$. Together with
Lemma~\ref{lem:infinity-model}, this gives the single critical
point required in Lemma~\ref{lem:compatible-pl}.

The dual projective space $(\Pj^n)^*$ parametrizes projective
hyperplanes in $\Pj^n$. For an irreducible projective variety
$Y\subsetneq\Pj^n$ and $p\in Y_{\mathrm{reg}}$, let
$\mathbb T_pY\subset\Pj^n$ denote the embedded projective
tangent space, to distinguish it from the vector tangent
space $T_pY$. The projective dual
$Y^*\subset(\Pj^n)^*$ is the Zariski closure of the set of
hyperplanes containing $\mathbb T_pY$ for some
$p\in Y_{\mathrm{reg}}$. If $Y$ is a hypersurface,
$\mathbb T_pY$ is itself a hyperplane, and the projective
Gauss map is $p\mapsto\mathbb T_pY$ on $Y_{\mathrm{reg}}$.

We use the reflexivity theorem for projective duality over
$\C$, including biduality $(Y^*)^*=Y$; see
\cite[Section~1.3]{Tevelev}. More precisely, for
$p\in Y_{\mathrm{reg}}$ and $H\in(Y^*)_{\mathrm{reg}}$,
the hyperplane $H$ is tangent to $Y$ at $p$ if and only if
the hyperplane in $(\Pj^n)^*$ parametrizing hyperplanes
through $p$ is tangent to $Y^*$ at $H$.

Let $Y$ be a hypersurface, and let an affine hyperplane
$H=\{\ell=t_0\}$ pass through
$p\in Y_{\mathrm{reg}}\cap\C^n$. We say that $H$ has an
\emph{ordinary quadratic tangency} with $Y$ at $p$ if
$\ell|_Y$ has a nondegenerate critical point there:
its differential vanishes, and its Hessian in local
holomorphic coordinates on $Y$ is nonsingular.

\begin{lemma}\label{lem:dual}
	Let $X\subset\C^n$ be the complex Zariski closure of the
	$C^\infty$ semialgebraic boundary of a convex body
	$K\subset\R^n$, as in Lemma~\ref{lem:nash-boundary}.
	The projective dual $\overline X^*\subset(\Pj^n)^*$ of its
	projective closure is an irreducible hypersurface.
	Outside a proper algebraic subset of $\overline X^*$, a
	hyperplane has exactly one point of tangency on the smooth
	locus of $\overline X$. This point lies in $X_{\mathrm{reg}}$,
	and the contact is an ordinary quadratic tangency.
\end{lemma}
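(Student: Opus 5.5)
The plan is to study the projective Gauss map
\[
\gamma:\overline X_{\mathrm{reg}}\longrightarrow(\Pj^n)^*,\qquad p\mapsto\mathbb T_p\overline X.
\]
By definition, $\overline X^*$ is the Zariski closure of its image. Since $\overline X$ is irreducible (it is the closure of the irreducible $X$ from Lemma~\ref{lem:nash-boundary}), $\overline X_{\mathrm{reg}}$ is irreducible, and so $\overline X^*$ is irreducible. By Chevalley's theorem the image of $\gamma$ is constructible, so it contains a dense Zariski open subset of $\overline X^*$.

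First I show that $\overline X^*$ is a hypersurface, that is, $\gamma$ has generic rank $n-1=\dim\overline X$. At a point $p\in X_{\mathrm{reg}}$ with tangent hyperplane $\{\ell=t_0\}$, the differential of $\gamma$ is identified with the Hessian of $\ell|_X$ in local coordinates, i.e.\ the second fundamental form. Consequently:
\begin{itemize}
\item $d\gamma_p$ is injective exactly when the tangency at $p$ is an ordinary quadratic tangency;
\item the locus $N\subset\overline X_{\mathrm{reg}}$ where $d\gamma$ degenerates is Zariski closed (a determinant condition).
\end{itemize}
To show $N$ is proper, I use a real point. Take $p_0\in\partial K$ at maximal distance from $0$. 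There $K$ lies inside the ball of radius $|p_0|$, so the second fundamental form of $\partial K$ at $p_0$ is positive definite. Since $\partial K\subset X$ is an open piece of a real hypersurface, $p_0$ is a smooth point of $X$: we need $dF(p_0)\neq0$. This holds because $(F)$ is the full vanishing ideal and $\partial K$ is a smooth real hypersurface of dimension $n-1$, so $F$ vanishes to order one at a generic point of $\partial K$. If $p_0$ happens not to be such a generic point, I replace it by a nearby point of $\partial K$ with positive-definite form, using openness. The Hessian is then nondegenerate, so $p_0\notin N$. Hence $\gamma$ is generically immersive, its image has dimension $n-1$, and $\overline X^*$ is a hypersurface.

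Next comes uniqueness of the tangency point, via reflexivity. Let $H\in(\overline X^*)_{\mathrm{reg}}$ and let $p\in\overline X_{\mathrm{reg}}$ be any point at which $H$ is tangent. By the stated reflexivity theorem, the hyperplane $p^\vee\subset(\Pj^n)^*$ of hyperplanes through $p$ is tangent to $\overline X^*$ at $H$. Since $H$ is a smooth point of the hypersurface $\overline X^*$, this tangent hyperplane is unique, namely $\mathbb T_H\overline X^*$. Therefore $p$ is determined by $H$. So every $H$ in $(\overline X^*)_{\mathrm{reg}}$ has at most one tangency point on $\overline X_{\mathrm{reg}}$, and on the dense open part of the image of $\gamma$ it has exactly one.

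Finally, I remove proper algebraic subsets of $\overline X^*$:
\begin{itemize}
\item the singular locus $\Sing\overline X^*$;
\item the complement of the open subset of $\overline X^*$ contained in $\gamma(\overline X_{\mathrm{reg}})$;
\item the closure of $\gamma(N)$;
\item the closure of $\gamma(\overline X_{\mathrm{reg}}\cap H_\infty)$;
\item the point $H_\infty$, if it lies in $\overline X^*$.
\end{itemize}
The third and fourth sets are images of algebraic sets of dimension at most $n-2$, since $N$ is proper and $\overline X\not\subset H_\infty$. Their closures therefore have dimension at most $n-2<\dim\overline X^*$, so they are proper. For $H$ outside the union, the unique tangency point $p$ satisfies $p\notin H_\infty$, so $p\in X_{\mathrm{reg}}$. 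Also $p\notin N$, so the contact is an ordinary quadratic tangency.

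The step I expect to need the most care is the identification of $d\gamma_p$ with the Hessian of $\ell|_X$ at $p$, and using it to justify both the dimension count and the ordinary-tangency conclusion. The other delicate point is verifying the smoothness of $X$ at the chosen real point of positive curvature. For the first, I would write $X$ locally as a graph $x_n=g(x')$, so that $\gamma$ becomes $x'\mapsto(\nabla g,\ g-x'\cdot\nabla g)$. Its differential is then governed by $D^2g$, which is exactly the Hessian of $\ell|_X$.
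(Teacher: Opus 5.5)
Your argument is correct, but it is organized differently from the paper's proof.

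\textbf{Hypersurface claim.} The paper finds a point of $\partial K\cap X_{\mathrm{reg}}$ with positive definite second fundamental form as follows. It chooses a regular value of the outward Gauss map $\nu_K$ lying outside $\nu_K(\partial K\cap\Sing X)$, using Sard's theorem and a semialgebraic dimension bound, and takes the corresponding support point. You instead take the point of $\partial K$ farthest from the origin and perturb it into $X_{\mathrm{reg}}$.

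\textbf{Uniqueness, affineness and ordinary contact.} The paper works with the conormal variety $\mathcal C_X$. Reflexivity identifies $\mathcal C_X$ with the graph closure of the Gauss map of $\overline X^*$, so the projection $q:\mathcal C_X\to\overline X^*$ is birational. A single exceptional set $B_0=q(\mathcal C_X\setminus\mathcal C_X^\circ)$ then gives at once the unique contact point, its affineness, and local invertibility of $\gamma$ there. You instead get uniqueness from the pointwise reflexivity statement at smooth points of $\overline X^*$, and existence from Chevalley. You then remove $\overline{\gamma(N)}$ and $\overline{\gamma(\overline X_{\mathrm{reg}}\cap H_\infty)}$ explicitly.

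\textbf{What each route buys.} Yours is more elementary: no Sard, no semialgebraic image bound, no birationality of $q$. It also makes the exceptional set explicit. Every piece of your union has dimension at most $n-2$, so it can serve as $B_0$ in Lemma~\ref{lem:good-real-directions}. The paper's route, in contrast, proves along the way that the critical values of $\nu_K$ and the set $\nu_K(\partial K\cap\Sing X)$ have empty interior. Lemma~\ref{lem:good-real-directions} reuses this fact, so with your proof it would have to be established there instead.

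\textbf{A step to tighten.} The density of $\partial K\setminus\Sing X$ in $\partial K$ is not really a statement about ``generic order of vanishing.'' The argument is this. Some $\partial F/\partial x_j$ is a nonzero polynomial of lower degree than $F$, so it is not divisible by $F$. Since $(F)$ is the full vanishing ideal of $\partial K$, it does not vanish identically on $\partial K$. By the identity theorem on the connected analytic manifold $\partial K$, it is therefore nonzero on a dense open subset.

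Also say explicitly why positive definiteness of the real form gives a nondegenerate complex Hessian. Near a real smooth point, $X$ is the complex graph of the holomorphic extension of the real local graph function of $\partial K$. Hence the complex Hessian of $\ell|_X$ there is the real Hessian of $\ell|_{\partial K}$.
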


\begin{proof}
	Let $\nu_K:\partial K\to\Sph$ be the outward Gauss map.
	It 
	is a Nash map because $\partial K$ is a Nash hypersurface.
	A value is regular if $d\nu_K$ is surjective at every point
	of its inverse image. The critical values form a compact set,
	since $\partial K$ is compact, and have measure zero by Sard's
	theorem \cite[Theorem~6.10]{Lee}. Thus the regular values form
	an open dense subset of $\Sph$. With the convention
	$\mathrm{II}_p(v,w)=d\nu_K|_p(v)\cdot w$, convexity makes
	$\mathrm{II}_p$ positive semidefinite. If $\nu_K(p)$ is a regular
	value, then $d\nu_K|_p$ is invertible, so $\mathrm{II}_p$
	is positive definite.
	
	We also need a support point outside $\Sing X$.
	Since $\partial K$ is Zariski dense in $X$ and $\Sing X$ is a
	proper algebraic subset defined over $\R$, there is a real
	polynomial vanishing on $\Sing X$ but not identically on
	$\partial K$. Its restriction is a nonzero Nash function on
	the connected Nash manifold $\partial K$.
	By \cite[Proposition~8.1.10 and Theorem~2.8.8]{BCR}, its zero
	set has dimension at most $n-2$, and the Gauss map does not
	increase semialgebraic dimension. Hence
	$\nu_K(\partial K\cap\Sing X)$ is compact and semialgebraic
	of real dimension at most $n-2$.
Choose a regular value $\xi$ outside this image and a support
point $p$ with $\nu_K(p)=\xi$. Then $p\in X_{\mathrm{reg}}$
and $d\nu_K|_p$ is invertible.

Let $\gamma$ denote the projective Gauss map of $\overline X$.
For $q\in\partial K$ near $p$, we have
\[
\gamma(q)=[-q\cdot\nu_K(q):\nu_K(q)].
\]
Projecting to the last $n$ homogeneous coordinates gives
$[\nu_K(q)]$. Since the projection
$\Sph\to\mathbb{RP}^{n-1}$ is a local diffeomorphism,
the restriction of $\gamma$ to $\partial K$ has real rank
$n-1$ at $p$.
In local holomorphic coordinates that are real on the real
locus, the Jacobian of $\gamma$ at $p$ is a real matrix.
Its rank over $\C$ is therefore also $n-1$.

Since $\dim_{\C}\overline X=n-1$, the image closure
$\overline X^*$ has complex dimension $n-1$.
It is irreducible as the image closure of the irreducible
smooth locus of $\overline X$, and is therefore a hypersurface.
	
	To prove that a general tangent hyperplane has a unique
	contact point, let $\mathcal C_X$ be the Zariski closure
	of the graph
	\[
	\{(p,\mathbb T_p\overline X):p\in X_{\mathrm{reg}}\}
	\subset\Pj^n\times(\Pj^n)^*.
	\]
	Since $X_{\mathrm{reg}}$ is dense in
	$(\overline X)_{\mathrm{reg}}$, the same closure is obtained
	using all smooth points of $\overline X$.
	It is irreducible of dimension $n-1$, and the second projection
	$q:\mathcal C_X\to\overline X^*$ is proper and surjective.
	By reflexivity \cite[Section~1.3]{Tevelev}, exchanging the two
	factors identifies $\mathcal C_X$ with the closure of the
	graph of the Gauss map of the hypersurface $\overline X^*$.
	This Gauss map is regular on $(\overline X^*)_{\mathrm{reg}}$,
	so its graph closure agrees with its graph over that locus.
	Hence $q$ is an isomorphism over
	$(\overline X^*)_{\mathrm{reg}}$, and in particular is birational.
	Geometrically, for a general $H$ with contact point $p$,
	the tangent hyperplane $\mathbb T_H(\overline X^*)$
	parametrizes all hyperplanes in $\Pj^n$ through $p$,
	and thus determines $p$ uniquely.

	The displayed graph is a dense Zariski open subset of
	$\mathcal C_X$. Since $q$ is birational, choose a dense open
	subset $\mathcal C_X^\circ\subset\mathcal C_X$ contained in
	this graph such that $q$ restricts to an isomorphism onto
	a smooth open subset of $\overline X^*$.
	Its complement is closed of dimension at most $n-2$.
	By properness of $q$, the set
	$B_0=q(\mathcal C_X\setminus\mathcal C_X^\circ)$
	is closed algebraic, and its dimension is at most $n-2$.
	For $H\in\overline X^*\setminus B_0$, surjectivity of $q$
	ensures that $q^{-1}(H)$ is nonempty. This fiber lies entirely
	in $\mathcal C_X^\circ$, so it consists of one point $(p,H)$,
	with $p\in X_{\mathrm{reg}}$.
	The first projection identifies the displayed graph with
	$X_{\mathrm{reg}}$. Hence the projective Gauss map has rank
	$n-1$ at $p$.
	
	Choose affine coordinates $(u,x_n)\in\C^{n-1}\times\C$
	in which $p=0$ and $H=\{x_n=0\}$.
	Then $X$ is locally the graph $x_n=\phi(u)$, where $\phi$
	is holomorphic, $\phi(0)=0$, and $D\phi(0)=0$.
	In dual affine coordinates $(a,b)$ representing the
	hyperplane $x_n=a\cdot u+b$, the Gauss map is
	\[
	u\longmapsto\bigl(D\phi(u),\,\phi(u)-u\cdot D\phi(u)\bigr).
	\]
	Its differential at $0$ sends $v$ to
	$\bigl(D^2\phi(0)v,0\bigr)$.
	Since the Gauss map has rank $n-1$ there, $D^2\phi(0)$
	is nonsingular. The defining function $x_n$ of $H$
	restricts to $\phi$ on $X$, so the tangency is ordinary
	quadratic.

\end{proof}

\begin{lemma}\label{lem:good-real-directions}
	There is a set $\mathcal G\subset\Sph$, dense in the usual
	topology, such that, for every $\xi\in\mathcal G$, the
	conclusions of Lemma~\ref{lem:infinity-model} hold for
	$[\xi]$ and the fiber $M_{b_K(\xi)}$ contains exactly one
	critical point of $f$.
	Under the map $(p,t)\mapsto\pi(p)$, this point maps to the
	unique support point of $K$ in direction $\xi$.
	The support point lies in $X_{\mathrm{reg}}$, and the
	critical point of $f$ is nondegenerate.
\end{lemma}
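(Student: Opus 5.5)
We intersect three kinds of genericity conditions on real unit directions and show that each excludes only a nowhere dense closed subset of $\Sph$, or at least leaves a dense set. First, Lemma~\ref{lem:infinity-model} supplies a nonempty Zariski open set $W\subset\Pj^{n-1}$. Its complement is a proper algebraic subset, and a proper complex algebraic subset of $\Pj^{n-1}$ cannot contain the real points of an open subset of $\mathbb{RP}^{n-1}$, since real points are Zariski dense. Hence its preimage in $\Sph$ is closed with empty interior.

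Second, we use Lemma~\ref{lem:dual}. Let $\mathcal S\subset\Sph$ be the open dense set of regular values of $\nu_K$ that avoid the compact set $\nu_K(\partial K\cap\Sing X)$, as in that proof. For $\xi\in\mathcal S$, the support point $p$ is unique (strict convexity is needed only at points with normal $\xi$, and positive definiteness of $\mathrm{II}$ gives isolated, hence by convexity unique, support points). It lies in $X_{\mathrm{reg}}$, and the support hyperplane $[-b_K(\xi):\xi]$ lies in $\overline X^*$. The map $\xi\mapsto[-b_K(\xi):\xi]$ is a local real-analytic diffeomorphism from $\mathcal S$ onto an open piece of the real locus of the hypersurface $\overline X^*$ (its Gauss-image inverse has full rank). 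This piece is totally real of maximal dimension, so it is Zariski dense in $\overline X^*$. It therefore meets the complement of the proper algebraic subset $B$ from Lemma~\ref{lem:dual} in a dense open set. For such $\xi$, the hyperplane $\{\ell=b_K(\xi)\}$ has exactly one tangency point on $(\overline X)_{\mathrm{reg}}$, namely $p$, and the tangency is ordinary quadratic.

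Third, and this is the main obstacle, we must exclude critical points of $f$ in $M_{b_K(\xi)}$ lying over $\Sing X$, meaning in the affine exceptional set. Critical points near $\Dcal$ are already excluded by Lemma~\ref{lem:infinity-model}. Over $X_{\mathrm{reg}}$, $\pi$ is an isomorphism, so critical points correspond exactly to tangency points, which gives only $p$, and nondegeneracy transfers. For $\pi^{-1}(\Sing X)$, stratify $E=\pi^{-1}(\Sing X)$ into smooth locally closed strata $S_\alpha$. A critical point $q$ of $f$ on $M_{t_0}$ with $q\in S_\alpha$ forces $q$ to be a critical point of $\ell\circ\pi|_{S_\alpha}$. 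By algebraic Sard (generic smoothness), for each fixed $\xi$ the critical values of $\ell\circ\pi|_{S_\alpha}$ form a finite set $\Sigma_\alpha(\xi)$. The set $\{(\xi,t):t\in\Sigma_\alpha(\xi)\}$ is contained in the image of the incidence variety of pairs $(q,[\xi])$ with $q$ critical on a stratum. A dimension count shows this image has dimension at most $n-1$ in $\C^n\times\C$. Indeed, $\pi(S_\alpha)$ has dimension at most $n-2$, and for a fixed $x\in\pi(S_\alpha)$ the condition ties $t=\xi\cdot x$. So, as an algebraic subset of the $(\xi,t)$-space, it is a proper subvariety distinct from the irreducible hypersurface given by the affine cone over $\overline X^*$. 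This holds because the cone over $\overline X^*$ has dimension $n$, and the bad set should be the union over $x\in\pi(S_\alpha)$ of the hyperplanes $t=\xi\cdot x$ in the restricted critical loci. Whenever this union has dimension $\le n-1$ it cannot contain the dual cone. The delicate case is when the bad set itself is a component of dimension $n$, the dual of a singular curve-like locus. Such components are irreducible hypersurfaces different from $\overline X^*$ (since $\overline X^*$ is the dual of $\overline X$ and biduality recovers $\overline X$, not a subvariety of $\Sing X$). Hence their intersection with $\overline X^*$ is proper. Removing it, again a nowhere dense set by the Zariski-density argument of the second step, yields $\mathcal G$ as a dense set. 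This completes the proof that $M_{b_K(\xi)}$ contains exactly the one nondegenerate critical point mapping to $p$.
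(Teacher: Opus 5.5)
Your first two steps are sound, and your treatment of $\Dcal$ and of the affine smooth locus agrees with the paper. Your density mechanism differs from the paper's but works. You show that $\xi\mapsto[-b_K(\xi):\xi]$ parametrizes a maximal totally real submanifold of the image, under the projective Gauss map $\gamma$, of a small neighborhood of $p_\xi$ in $X_{\mathrm{reg}}$. Hence no open set of real directions can send its support hyperplanes into a proper algebraic subset of $\overline X^*$. You need this local statement, not just global Zariski density, but your argument gives it. The paper instead projects the bad set $B_1\subset\overline X^*$, which has dimension at most $n-2$, by $\rho:(\Pj^n)^*\setminus\{H_\infty\}\to\Pj^{n-1}$. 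For $[\xi]$ off the closure of the image, no hyperplane of the entire parallel pencil lies in $B_1$. That route is purely algebraic and needs only Zariski density of real points, whereas yours uses the real analytic dependence of the support point on $\xi$ near regular values of $\nu_K$.

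The gap is in your third step, which you rightly call the crux. The claimed bound $\dim\le n-1$ is false. If $x_0$ is an isolated point of $\Sing X$ and $S_\alpha\subset\pi^{-1}(x_0)$, then $\ell\circ\pi|_{S_\alpha}$ is constant, every point of $S_\alpha$ is critical, and the bad set contains $\{(\xi,\xi\cdot x_0)\}$, which has dimension $n$. You then rely on the assertion that $n$-dimensional components are duals of subvarieties of $\Sing X$, but your stratification does not yield this. A critical point $q$ of $\ell\circ\pi|_{S_\alpha}$ only says that the hyperplane contains $\pi(q)+d\pi(T_qS_\alpha)$. If $d(\pi|_{S_\alpha})$ drops rank at $q$, or $\pi(q)$ is singular on $\overline{\pi(S_\alpha)}$, the hyperplane need not lie in $\overline{\pi(S_\alpha)}^{\,*}$. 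For instance, suppose a stratum is a curve covering a curve $C\subset\Sing X$ with ramification at $q$, as over a pinch point. Then every hyperplane through $\pi(q)$ is bad, and a general such hyperplane is not in $\overline C^{\,*}$.

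The missing step is the paper's decomposition of $\pi^{-1}(\Sing X)$, by generic smoothness and induction on dimension, into smooth pieces $T$ such that $\pi|_T:T\to S$ is a surjective submersion onto a smooth irreducible locally closed $S\subset\Sing X$. Then $d\ell|_{T_pS}=0$ puts the projective closure of the hyperplane in $\overline S^{\,*}$. By the conormal count, $\overline S^{\,*}$ is irreducible of dimension at most $n-1$. If $\overline X^*\subset\overline S^{\,*}$, the two would be equal, and biduality would give $\overline S=\overline X$, a contradiction. This replaces your dimension count altogether. The set $B_1=B_0\cup\bigcup_S(\overline X^*\cap\overline S^{\,*})$ is then a proper algebraic subset of $\overline X^*$, to which either density argument applies.
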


\begin{proof}
	Let $B_0\subset\overline X^*$ be the exceptional subset
	constructed in the proof of Lemma~\ref{lem:dual}; its complex
	dimension is at most $n-2$.
	Every $H\in\overline X^*\setminus B_0$ has exactly one
	contact point on $(\overline X)_{\mathrm{reg}}$.
	This point lies in $X_{\mathrm{reg}}$, and the tangency is
	ordinary quadratic.
	We must also exclude critical points over the affine
	singular locus $\Sing X$.

Decompose the affine exceptional set
$\pi^{-1}(\Sing X)\subset M\setminus D_\infty$ into finitely
many smooth irreducible locally closed complex algebraic
pieces $T$ such that $\pi|_T$ is a submersion onto a smooth
irreducible locally closed subset $S\subset\Sing X$.
To obtain this decomposition, let $Z$ be an irreducible
component of the remaining set, and let $Y$ be the Zariski
closure of $\pi(Z)$ in $\Sing X$.
Choose a smooth dense open subset of $Z$, disjoint from
the other components and mapping into $Y_{\mathrm{reg}}$.
By generic smoothness
\cite[Chapter~III, Lemma~10.5]{Hartshorne}, it contains a
dense open subset $T$ such that
$\pi|_T:T\to Y_{\mathrm{reg}}$ is smooth.
Since smooth morphisms are open, $S=\pi(T)$ is open in
$Y_{\mathrm{reg}}$; it is smooth and irreducible, and
$\pi|_T:T\to S$ is a surjective submersion.
Doing this for each component leaves a closed subset of
smaller dimension. Induction on dimension gives the required
finite decomposition.
	
On $\Mcal\setminus\Dcal$, the first projection identifies
$f$ with $\ell\circ\pi$ on $M\setminus D_\infty$.
Let $\widetilde p\in T$ be a critical point of
$\ell\circ\pi$, and set $p=\pi(\widetilde p)\in S$.
Restricting its differential to $T_{\widetilde p}T$ gives
\[
d\ell_p\circ d\pi_{\widetilde p}\big|_{T_{\widetilde p}T}=0.
\]
Since $d\pi_{\widetilde p}(T_{\widetilde p}T)=T_pS$, we have
$d\ell_p|_{T_pS}=0$.
Thus the hyperplane $\{\ell=\ell(p)\}$ contains the affine
tangent space $p+T_pS$.
Since $S$ is locally closed in $\Pj^n$, it is open in its
projective closure $\overline S$. As $S$ is smooth,
$p\in(\overline S)_{\mathrm{reg}}$.
The projective closure of this hyperplane therefore contains
$\mathbb T_p\overline S$ and belongs to $\overline S^{\,*}$.

	Since $S\subset\Sing X$, its projective closure has dimension
	at most $n-2$ and is a proper subvariety of $\overline X$.
	We claim that, for every such $S$,
	\begin{equation}\label{eq:dual-exclusion}
		\overline X^*\not\subset\overline S^{\,*}.
	\end{equation}
	To see this, put $k=\dim\overline S$.
	For $p\in(\overline S)_{\mathrm{reg}}$, the hyperplanes
	$H$ containing $\mathbb T_p\overline S$ form a projective
	space of dimension $n-k-1$.
	The pairs $(p,H)$ form a projective-space bundle over
	$(\overline S)_{\mathrm{reg}}$, so their total space is
	irreducible of dimension $k+(n-k-1)=n-1$.
	Its image closure $\overline S^{\,*}$ is therefore irreducible
	of dimension at most $n-1$.
	If it contained the irreducible hypersurface $\overline X^*$,
	they would be equal. Biduality would then give
	$\overline S=\overline X$, contradicting
	$\overline S\subsetneq\overline X$.
	This proves \eqref{eq:dual-exclusion}.
	
	Set
	\[
	B_1=B_0\cup\bigcup_S
	\bigl(\overline X^*\cap\overline S^{\,*}\bigr),
	\]
	where the union runs over the finitely many target sets $S$
	above. Each added intersection is a proper algebraic subset
	of the irreducible hypersurface $\overline X^*$.
	Thus $B_1$ is algebraic of dimension at most $n-2$.
	
	We now pass from hyperplanes to directions. The map
	\[
	\rho:(\Pj^n)^*\setminus\{H_\infty\}\longrightarrow\Pj^{n-1},
	\qquad
	\rho\!\left(\left\{\sum_{i=1}^n\eta_i z_i=s z_0\right\}\right)
	=[\eta]
	\]
	is a morphism; its domain consists of the projective closures
	of affine hyperplanes. The Zariski closure of
	$\rho(B_1\setminus\{H_\infty\})$ has dimension at most $n-2$,
	so it is a proper algebraic subset of $\Pj^{n-1}$.
For $[\eta]$ outside this closure, the projective closure
of $\{\eta\cdot x=s\}$ lies outside $B_1$ for every $s\in\C$.
Intersect this nonempty Zariski open set of directions with
the one in Lemma~\ref{lem:infinity-model}.
Since $\Pj^{n-1}$ is irreducible, the resulting Zariski open
set $\Omega$ is nonempty.
	
	The real unit vectors with $[\xi]\in\Omega$ form an open
	dense subset of $\Sph$ in the usual topology.
	Indeed, a proper complex projective algebraic subset cannot
	contain a nonempty open set of real directions: a nonzero
	homogeneous polynomial vanishing there would vanish on an
	open cone in $\R^n$, and hence be the zero polynomial.
Let $\mathcal G$ consist of those $\xi\in\Sph$ such that
$[\xi]\in\Omega$, $\xi$ is a regular value of $\nu_K$, and
$\xi\notin\nu_K(\partial K\cap\Sing X)$.
As shown in the proof of Lemma~\ref{lem:dual}, the critical
values of $\nu_K$ and the set
$\nu_K(\partial K\cap\Sing X)$ are compact with empty interior.
Their complements are open and dense, so $\mathcal G$ is
open and dense in $\Sph$ in the usual topology.
	
	Fix $\xi\in\mathcal G$, and write $\ell(x)=\xi\cdot x$.
	The inverse function theorem makes $\nu_K^{-1}(\xi)$ discrete,
	since $\xi$ is a regular value. This inverse image is exactly
	the nonempty convex support face
	$K\cap\{x:\xi\cdot x=b_K(\xi)\}$.
	A discrete convex set consists of one point; denote it by
	$p_\xi$. By the choice of $\mathcal G$, it lies in
	$X_{\mathrm{reg}}$.
	
Let $H$ be the projective closure of
$\{x:\xi\cdot x=b_K(\xi)\}$.
Since $p_\xi\in X_{\mathrm{reg}}$, the tangent space
$T_{p_\xi}X$ is the complexification of
$T_{p_\xi}(\partial K)$.
Thus $H$ is tangent to $X$ at $p_\xi$, so
$H\in\overline X^*$, and our choice of direction gives
$H\notin B_1$.
	Lemma~\ref{lem:dual} therefore makes $p_\xi$ its only contact
	point on the smooth locus of $\overline X$, with ordinary
	quadratic contact. Over $X_{\mathrm{reg}}$, a point of this
	fiber is critical for $f$ exactly when $H$ is tangent to $X$
	at its image, because $\pi$ is an isomorphism there.
	Hence the lift of $p_\xi$ is the only critical point over
	the affine smooth locus.
Any critical point of $f$ in this fiber lying over $\Sing X$
would force $H\in\overline S^{\,*}$ for one of the sets $S$
above, and hence $H\in B_1$, a contradiction.
	Lemma~\ref{lem:infinity-model} excludes critical points on
	$\Dcal$. These three cases cover the entire fiber
	$M_{b_K(\xi)}$.
	
	Finally, near the lift of $p_\xi$, the first projection and
	$\pi$ identify $f$ with $\ell|_X$. Its Hessian is therefore
	nonsingular by the ordinary quadratic contact just proved.
	This gives the asserted nondegeneracy.
\end{proof}

\begin{remark}\label{rem:affine-exceptional}
	The preceding argument excludes critical points over
	$\Sing X$ at the chosen support value; it does not remove
	the affine exceptional set. The points lying over
	$\Sing X\cap\{\ell=t\}$ remain in $Z_t$, not in the boundary
	divisor $D_t$.
	Since the composite map
	$\Zcal\to M\setminus D_\infty\to X\hookrightarrow\C^n$
	is regular, the ambient polynomial differential form used
	below pulls back to a holomorphic form on $\Zcal$,
	including at these points.
	No smoothness of $\Sing X$ is assumed; only the finite
	algebraic decomposition used above is needed.
\end{remark}

\section{Vanishing periods with finite branching}\label{sec:semisimplicity}

Let $f:\Mcal\to\C$ and $\Dcal\subset\Mcal$ be as above, with smooth projective fibers over $U=\C\setminus\Sigma$. Fix $\tau\in U$ and put
\begin{equation}\label{eq:homology}
 \mathsf A=H_d(Z_\tau;\Q),\qquad
 \mathsf H=H_d(M_\tau;\Q),\qquad
 j:\mathsf A\longrightarrow\mathsf H.
\end{equation}
Here $j$ is induced by the inclusion $Z_\tau\hookrightarrow M_\tau$.
A cycle representing a nonzero class in $\mathsf A$ may bound
a chain in $M_\tau$, so $j$ need not be injective.

The complex structure orients the compact real $2d$-manifold
$M_\tau$, which has no boundary. Its middle-dimensional
intersection pairing on $\mathsf H$, denoted by
$\langle\ ,\ \rangle$, is computed by counting signed
intersections of transverse representatives.
Poincar\'e duality makes this pairing nondegenerate.
Interchanging the two classes gives the sign $(-1)^d$,
so the pairing is alternating when $d$ is odd; see
\cite[Section~3.3]{Hatcher}.
If $M_\tau$ is disconnected, the pairing is the direct sum
of the pairings on its connected components.
The argument below uses the full homology group $\mathsf H$.

A nondegenerate, or Morse, critical point of a holomorphic function is a point where its differential vanishes and its complex Hessian is nonsingular. The holomorphic Morse lemma gives local coordinates $u_1,\ldots,u_{d+1}$ in which the function minus its critical value is $u_1^2+\cdots+u_{d+1}^2$. A nearby fiber contains a real $d$-sphere that contracts to the critical point. The homology class of this sphere in the nearby fiber is called
a \emph{vanishing cycle}. A \emph{Milnor ball} is a sufficiently
small closed ball centered at the critical point in the above
local coordinates, chosen so that its boundary is transverse
to every sufficiently nearby regular fiber. We use these constructions and the Picard--Lefschetz formula in the form developed in \cite{AGV,Milnor,Vassiliev2002}.

Suppose that the fiber over $t_0\in\Sigma$ has exactly one critical point of $f$, and that this point is nondegenerate and lies away from $\Dcal$. Choose a path in $U$ from $\tau$ to a regular value sufficiently
close to $t_0$. Transporting the local vanishing sphere back
to $\tau$ along the reverse path defines a class
$\delta\in\mathsf A$. We also write
\[
 v=j\delta\in\mathsf H.
\]

\begin{lemma}\label{lem:compatible-pl}
A based meridian of $t_0$ follows the chosen path from $\tau$, traverses one small positively oriented circle around $t_0$, and returns along the path. We use $T$ for the monodromy automorphisms induced by this
loop on both $\mathsf A$ and $\mathsf H$. For one fixed sign $\epsilon\in\{1,-1\}$, the Picard--Lefschetz transformations are
\begin{align}
 Ta&=a+\epsilon\langle ja,v\rangle\delta,
 &&a\in\mathsf A,\label{eq:pl-open}\\
 Th&=h+\epsilon\langle h,v\rangle v,
 &&h\in\mathsf H.\label{eq:pl-compact}
\end{align}
\end{lemma}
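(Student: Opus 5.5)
We reduce both formulas to the classical local Picard--Lefschetz theorem by localizing the monodromy of the meridian near the unique critical point.

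\emph{Localization.} Fix a Milnor ball $W$ around the critical point $q_0\in M_{t_0}\setminus\Dcal$. Choose a closed disk $\Delta$ about $t_0$ with $\Delta\cap\Sigma=\{t_0\}$, small enough that $\partial W$ is transverse to $M_t$ for all $t\in\Delta$. Over $\Delta$, the map $f$ restricted to $f^{-1}(\Delta)\setminus\operatorname{int}W$ is a proper submersion. Near $\Dcal$ it has the relative normal-crossing form of Lemma~\ref{lem:infinity-model}, and $q_0$ is the only critical point in $M_{t_0}$. By properness, after shrinking $\Delta$, there are no critical points in $f^{-1}(\Delta)\setminus W$.

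Now repeat the vector-field construction used above for the trivializations over $B\subset U$. Take lifts that are tangent to every component of $\Dcal$ near $\Dcal$ and tangent to $\partial W$ near $\partial W$, and glue them with a partition of unity. Transport along the circle $\partial\Delta$ then gives a geometric monodromy diffeomorphism $\phi$ of $M_{t_1}$, with $t_1\in\partial\Delta$. This $\phi$ preserves $D_{t_1}$, and outside $W$ it can be taken to be the identity. The latter holds because over $\Delta$ the complement of $\operatorname{int}W$ is a trivial fibration; we lift the horizontal vector field from a trivialization of this product over the contractible disk, so that $\phi$ restricted to the complement of $W$ is isotopic to the identity relative to $\partial W$.

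Inside $W$, the standard local theory \cite{AGV,Milnor} applies to $u_1^2+\cdots+u_{d+1}^2$, whose Milnor fiber $W\cap M_{t_1}$ is homotopy equivalent to the vanishing sphere $S$. The monodromy is a Dehn twist supported near $S$, and its variation operator is
\[
\operatorname{Var}:H_d(W\cap M_{t_1},\partial)\to H_d(W\cap M_{t_1}),\qquad [c]\mapsto\epsilon\,\langle c,S\rangle_{\mathrm{loc}}[S],
\]
with $\epsilon=(-1)^{d(d+1)/2}$ in the usual orientation conventions. This sign depends only on $d$, and it is the same for both formulas because the same twist $\phi$ induces both.

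\emph{Global formulas.} For a cycle $a$ in $Z_{t_1}$, we may assume by transversality that $a$ meets $W$ in a relative cycle $c$. Then $\phi_*a-a$ equals the image of $\operatorname{Var}(c)$, that is,
\[
\phi_*a-a=\epsilon\,\langle c,S\rangle_{\mathrm{loc}}[S],
\]
now viewed as a class in $Z_{t_1}$; this is possible because $W$ is disjoint from $D_{t_1}$. The local intersection number $\langle c,S\rangle_{\mathrm{loc}}$ equals the global intersection number $\langle ja,v\rangle$ computed in $M_{t_1}$, since $S\subset W$ and intersections are counted there. This proves \eqref{eq:pl-open} at $t_1$. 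The same argument applied to a cycle $h$ in $M_{t_1}$ gives \eqref{eq:pl-compact}.

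Finally, we transport both formulas to $\tau$ along the chosen path using the Gauss--Manin isomorphisms. These commute with $j$, carry $S$ to $\delta$ and $v$ by definition, and preserve the intersection pairing because they are induced by orientation-preserving diffeomorphisms. Conjugating by the path therefore turns the local monodromy into the monodromy $T$ of the based meridian.

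\emph{Main obstacle.} The delicate step is the construction of a single geometric monodromy that simultaneously preserves $\Dcal$, is trivial outside $W$, and acts compatibly on $Z_{t_1}$ and $M_{t_1}$. Only such a map makes the open and compact formulas share the same $\epsilon$ and the same vanishing class. We handle it by using the relative normal-crossing charts of Lemma~\ref{lem:infinity-model}, which give lifts near $\Dcal$ for every finite $t\in\Delta$ including $t_0$, so that no degeneration occurs at infinity; the remaining points are treated by the submersion property away from $q_0$.
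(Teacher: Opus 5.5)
Your proposal is correct and follows essentially the same route as the paper. Both arguments choose a Milnor ball disjoint from $\Dcal$, use lifts tangent to $\Dcal$ and to the ball's boundary sphere to trivialize the complement so that the monodromy is supported in the ball, apply the local Picard--Lefschetz variation, identify the local intersection number with the global one $\langle ja,v\rangle$ in the compact fiber, and transport back to $\tau$; you simply spell out a few steps (shrinking the disk by properness, the variation operator) that the paper treats briefly.
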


\begin{proof}
Choose a Milnor ball around the unique critical point, disjoint from the boundary divisor. On the complement of its interior, $f$ is a submersion over a sufficiently small disk about $t_0$, the divisor remains relative simple normal crossing, and the boundary sphere of the Milnor ball is transverse to the fibers. The boundary-preserving lifting argument following \eqref{eq:fibres} applies here as well, with lifts also tangent to this boundary sphere, which is disjoint from the divisor. Properness gives a simultaneous trivialization of the complement. The local monodromy can consequently be represented by a map supported in the Milnor ball. Transport back to the reference fiber gives the based
monodromy used in the statement.

The local Picard--Lefschetz formula adds the vanishing sphere multiplied by its intersection number with the transported cycle. For a compact cycle in the open fiber, this local intersection number is the intersection number of its image in $M_\tau$ with $v$. The same local ball and orientations give \eqref{eq:pl-open} and \eqref{eq:pl-compact}; the choice of convention affects only $\epsilon$. This is the usual Picard--Lefschetz formula and its functoriality under inclusion; compare \cite[Section~2, proof of Theorem~1]{Vassiliev2018}. The trivialization outside the ball preserves the divisor, so there is no additional term supported at infinity.
\end{proof}

Let $\lambda$ be a single-valued holomorphic $d$-form on $\Zcal$, whose restriction $\lambda_t$ to each smooth fiber is closed. For $a\in\mathsf A$, write
\[
 I_a(t)=\int_{a(t)}\lambda_t
\]
for its local period, where $a(t)$ is Gauss--Manin transport near $\tau$. The period is holomorphic. Here $f^*(dt)=df$ is the pullback of the coordinate one-form $dt$, and $f^*(dt)\wedge\lambda$ is closed because it is a top-degree holomorphic form on the smooth total space. Compact representatives may be transported over a small disk; applying Stokes' theorem to this form and then Morera's theorem proves holomorphy. We use the same argument across critical values in Proposition~\ref{prop:boundary-entire}.

A function germ at $\tau$ records a function only up to agreement on a neighborhood of $\tau$. For a loop $g$ in $U$ based at $\tau$, let $ga$ denote the
class obtained by transporting $a$ around $g$. Analytic
continuation of $I_a$ along this loop gives the germ $I_{ga}$. A period has \emph{finite branching} on $U$ if it has only finitely many germs at $\tau$ under all such continuations. This condition refers to the period, not to the orbit of the homology class.

\begin{proposition}\label{prop:semisimplicity}
Under the preceding hypotheses, assume that $d$ is odd. If $I_\delta$ is not identically zero and has finite branching on $U$, then
\begin{equation}\label{eq:null-compact}
 j\delta=0.
\end{equation}
\end{proposition}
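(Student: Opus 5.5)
We argue by contradiction and assume $v=j\delta\ne0$. The plan is to show that the monodromy orbit of $\delta$ then contains infinitely many classes whose periods are pairwise distinct germs. This contradicts finite branching.

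\emph{Step 1: semisimplicity.} Let $\Gamma=\pi_1(U,\tau)$ act on $\mathsf H$ by Gauss--Manin transport. Since $f$ restricted to $f^{-1}(U)$ is a smooth projective family, Deligne's semisimplicity theorem \cite{Deligne1971} makes $\mathsf H\otimes\C$ a semisimple $\Gamma$-module. Let $W\subset\mathsf H$ be the $\Gamma$-submodule generated by $v$. By semisimplicity, $W$ has a $\Gamma$-stable complement. Restricted to $W$, the intersection pairing is either nondegenerate or has a $\Gamma$-stable radical $R=W\cap W^\perp$.

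If $\langle v,W\rangle=0$, then $\langle gv,v\rangle=\langle v,g^{-1}v\rangle=0$ for all $g$, so $v\in R$. We then exclude this case as follows. Suppose some $h\in\mathsf H$ has $\langle h,v\rangle\ne0$; such an $h$ exists by Poincar\'e duality, since $v\ne0$. Then $T^kh=h+k\epsilon\langle h,v\rangle v$, by \eqref{eq:pl-compact} and $\langle v,v\rangle=0$ (because $d$ is odd). So $T$ acts unipotently and nontrivially on the $\Gamma$-span of $h$. Semisimplicity of the whole module does not immediately forbid this, so the argument should be rephrased through \eqref{eq:pl-open} on $\mathsf A$, as follows.

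\emph{Step 2: unipotent growth in $\mathsf A$.} Pick $a\in\mathsf A$ with $\langle ja,v\rangle\ne0$. If every $a$ pairs to zero with $v$, then $v$ is orthogonal to $j\mathsf A$. We would then need $v\in j\mathsf A$ to be isotropic against all of $j\mathsf A$, which is where Deligne is really needed; see Step 3. Given such an $a$, and using $\langle j\delta,v\rangle=\langle v,v\rangle=0$, the formula \eqref{eq:pl-open} gives
\[
T^ka=a+k\epsilon\langle ja,v\rangle\delta,
\qquad
I_{T^ka}=I_a+k\epsilon\langle ja,v\rangle I_\delta .
\]
Since $I_\delta\not\equiv0$, these are infinitely many distinct germs. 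This contradicts finite branching, provided $I_a$ itself has finite branching. But we only know finite branching for $I_\delta$, so $a$ must be taken in the orbit of $\delta$. Concretely, we look for $g\in\Gamma$ with $\langle gv,v\rangle\ne0$ and take $a=g\delta$. Then $I_{T^kg\delta}=I_{g\delta}+k\epsilon\langle gv,v\rangle I_\delta$ are all continuations of $I_\delta$, and they are pairwise distinct. This is the desired contradiction.

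\emph{Step 3: the main obstacle.} It remains to show that $v\ne0$ forces $\langle gv,v\rangle\ne0$ for some $g\in\Gamma$. Suppose instead that $\langle gv,v\rangle=0$ for all $g$. Then $W$ is totally isotropic. By Deligne's semisimplicity, $\mathsf H=W\oplus W'$ with $W'$ $\Gamma$-stable. Since the pairing is $\Gamma$-invariant and nondegenerate, it induces a $\Gamma$-equivariant isomorphism $\mathsf H\to\mathsf H^\vee$.

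Choose $h\in W'$ with $\langle h,v\rangle\ne0$. This is possible because $W^\perp\supset W$ and the pairing is nondegenerate, so $v$ pairs nontrivially with some element, and hence, after subtracting its $W$-component, with some element of $W'$. Then $Th-h=\epsilon\langle h,v\rangle v\in W\cap W'=0$, by $\Gamma$-stability of $W'$. This is a contradiction, so $v=0$.

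The delicate points are the correct invocation of Deligne's theorem for the rational local system $R^df_*\Q$ (equivalently, homology) over $U$, including the case of disconnected fibers, and the fact that the $\Gamma$-stable complement can be taken over $\Q$ compatibly with \eqref{eq:pl-compact}. Once Step 3 gives some $g$ with $\langle gv,v\rangle\ne0$, Step 2 concludes that $I_\delta$ has infinitely many germs, so \eqref{eq:null-compact} holds.
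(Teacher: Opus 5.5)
Your proposal is correct and is essentially the paper's proof, phrased by contradiction. Finite branching applied to $I_{T^kg\delta}=I_{g\delta}+k\epsilon\langle gv,v\rangle I_\delta$ forces $\langle gv,v\rangle=0$ for all $g$. A $\Gamma$-stable complement $W'$ of $W=\Span\{gv\}$, supplied by Deligne's theorem, then yields $(T-\mathrm{Id})W'\subset W\cap W'=0$, which contradicts $v\ne0$ by nondegeneracy; the detours in Steps~1--2 (the unipotent $h$ and an arbitrary $a\in\mathsf A$) are unnecessary and can be dropped.
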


\begin{proof}
Let $G=\pi_1(U,\tau)$ be the fundamental group of based loops
modulo homotopy, acting by monodromy. Since $\langle v,v\rangle=0$,
formula \eqref{eq:pl-open} gives, for every $g\in G$ and
$k\in\mathbb Z$,
\[
 T^k(g\delta)=g\delta+
 k\epsilon\langle j(g\delta),v\rangle\delta.
\]
Taking periods gives an identity of holomorphic germs at $\tau$:
\begin{equation}\label{eq:finite-values}
 I_{T^kg\delta}=I_{g\delta}+
 k\epsilon\langle j(g\delta),v\rangle I_\delta.
\end{equation}
For each fixed $g$, finite branching allows only finitely many
germs on the left as $k$ varies. Since $I_\delta$ is a nonzero
germ, the coefficient of $k$ must vanish. Thus
\begin{equation}\label{eq:orbit-orthogonal}
 \langle j(g\delta),v\rangle=0\qquad(g\in G).
\end{equation}

Set
\[
 W=\Span_{\Q}\{j(g\delta):g\in G\}\subset\mathsf H.
\]
Inclusion commutes with transport, as explained after \eqref{eq:fibres}, so $j(ga)=g(ja)$. Hence $W$ is a $G$-invariant subspace. By \eqref{eq:pl-compact} and \eqref{eq:orbit-orthogonal}, $T$ acts as the identity on $W$.

A representation is semisimple if each invariant subspace has an invariant direct-sum complement. By Lemma~\ref{lem:infinity-model} and \eqref{eq:base}, the restriction $f^{-1}(U)\to U$ is a smooth projective morphism over the algebraic curve $U$. Deligne's semisimplicity theorem therefore applies to its cohomology local system with fibers $H^d(M_t;\Q)$; see \cite[Proposition~4.2.5(i), (iii) and Theorem~4.2.6]{Deligne1971}. Dualizing gives semisimplicity of the representation on $\mathsf H$. Thus there is a $G$-invariant complement
\begin{equation}\label{eq:semisimple-splitting}
 \mathsf H=W\oplus W'.
\end{equation}
The complement $W'$ is invariant under all monodromy transformations, not only under $T$. Let $\mathrm{Id}$ denote the identity on $\mathsf H$. By \eqref{eq:pl-compact}, $(T-\mathrm{Id})\mathsf H\subset\Q v\subset W$. Since $W'$ is $T$-invariant,
\[
 (T-\mathrm{Id})W'\subset W\cap W'=0.
\]
Therefore $T$ is the identity on all of $\mathsf H$. If $v\ne0$, nondegeneracy of the compact intersection pairing gives $h\in\mathsf H$ with $\langle h,v\rangle\ne0$, contradicting \eqref{eq:pl-compact}. Hence $v=0$.
\end{proof}

\begin{remark}\label{rem:global-semisimplicity}
Only the compact local system $H_d(M_t;\Q)$ is asserted to be semisimple. The open local system $H_d(Z_t;\Q)$ need not be semisimple. Semisimplicity concerns the whole monodromy representation; it does not assert that each individual monodromy operator is diagonalizable. Also, the base for Deligne's theorem is the algebraic curve $U$, not a small analytic punctured disk. The passage from finite branching to \eqref{eq:null-compact} uses \eqref{eq:finite-values}; it does not assume that a nonzero homology class has a nonzero period for every differential form.
\end{remark}

\begin{remark}\label{rem:ball-cycle}
For the unit ball in $\R^3$ and an internal section at height $t$, the complexification of the section boundary is a smooth affine conic, isomorphic to $\C^*=\C\setminus\{0\}$. Its smooth projective compactification is $\Pj^1$. The real section circle is nonzero in $H_1(\C^*;\Q)$ but maps to zero in $H_1(\Pj^1;\Q)$, while the section area is the nonzero polynomial $\pi(1-t^2)$. Thus $j\delta=0$ does not imply that the affine period vanishes.
\end{remark}

\section{Periods supported near infinity}\label{sec:boundary-periods}

We now use the boundary regularity over the full parameter line provided by Lemma~\ref{lem:infinity-model}. A class that becomes zero in compact homology can be represented near the deleted divisor. The advantage is that this neighborhood remains smooth even when the interior fiber is singular. We prove holomorphic extension of its period, thereby excluding finite poles as well as branching.

\begin{proposition}\label{prop:boundary-entire}
Let $f:\Mcal\to\C$ be a projective morphism from a smooth complex algebraic variety of dimension $d+1$. Let $\Dcal\subset\Mcal$ be a reduced divisor such that $f$ is smooth near $\Dcal$ and $\Dcal$ is a relative simple normal-crossing divisor over all of $\C$. Suppose $f$ is smooth over $U=\C\setminus\Sigma$, where $\Sigma$ is finite. Fix $\tau\in U$ and use the notation \eqref{eq:fibres} and \eqref{eq:homology}. Let $\lambda$ be a holomorphic $d$-form on $\Zcal$, with closed restrictions to smooth fibers.

If $a\in H_d(Z_\tau;\Q)$ satisfies $ja=0$, then its period germ $I_a$ extends to a single-valued entire function of $t$.
\end{proposition}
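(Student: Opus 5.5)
The plan is to represent $a$ by a cycle supported in a tubular neighborhood of the boundary divisor $D_\tau$. Because this neighborhood is trivialized over all of $\C$, transporting the representative never meets a critical point.

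\emph{Step 1: a representative near $D_\tau$.} Choose a closed regular neighborhood $N_\tau$ of $D_\tau$ in $M_\tau$ that deformation retracts onto $D_\tau$, and put $N_\tau^\circ=N_\tau\setminus D_\tau$. Apply the Mayer--Vietoris sequence to the open cover $M_\tau=Z_\tau\cup\operatorname{int}N_\tau$:
\[
H_d(N_\tau^\circ)\to H_d(Z_\tau)\oplus H_d(N_\tau)\to H_d(M_\tau).
\]
The element $(a,0)$ maps to $ja=0$, so exactness gives a class $b\in H_d(N_\tau^\circ;\Q)$ whose image in $H_d(Z_\tau)$ is $a$, up to sign. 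Hence $a$ is represented by a cycle $\gamma$ lying in a punctured tubular neighborhood of $D_\tau$.

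\emph{Step 2: global transport near the boundary.} By Lemma~\ref{lem:infinity-model}, or more generally by the hypothesis, $f$ is smooth near $\Dcal$ and $\Dcal$ is relative simple normal crossing over all of $\C$. Take a neighborhood $\mathcal N$ of $\Dcal$ on which $f$ is a submersion. Properness of $f$ lets us choose $\mathcal N$ so that its closure in $f^{-1}(\overline B)$ is compact for every disk $B$.

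Now repeat the boundary-preserving lifting argument following \eqref{eq:fibres}. Use relative normal-crossing coordinates and keep the $u$-coordinates fixed, which makes the lifts tangent to $\Dcal$. The lifts must also keep $\mathcal N$ invariant. One way is to patch lifts in the $u$-coordinates, which preserve the level sets of $|u_i|$, and to take $\mathcal N$ as a union of polydisc neighborhoods of the form $\{\min_i|u_i|<\varepsilon\}$. Another is to shrink $\mathcal N$ appropriately over each compact disk.

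The result is a trivialization of $(\mathcal N,\Dcal)$ over every simply connected domain in $\C$, including disks around points of $\Sigma$. This pair is an honest fiber bundle over all of $\C$, so it is trivial, since $\C$ is contractible. Transporting $\gamma$ therefore gives a well-defined family of cycles $\gamma(t)\subset\mathcal N\cap Z_t$ for every $t\in\C$, and this family is independent of the path.

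\emph{Step 3: holomorphy and agreement.} Define $J(t)=\int_{\gamma(t)}\lambda_t$ for $t\in\C$. The form $\lambda$ is holomorphic on $\Zcal\supset\mathcal N\setminus\Dcal$, and $f^*(dt)\wedge\lambda$ is closed as a top-degree holomorphic form. The Stokes--Morera argument stated before Proposition~\ref{prop:semisimplicity} then shows that $J$ is holomorphic on every disk in $\C$, hence entire. The cycles $\gamma(t)$ are compact in $Z_t$, so no integrability issue arises near $D_t$.

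It remains to compare with $I_a$. For $t$ near $\tau$, the two transports of $\gamma$ (the one inside $\mathcal N$ and Gauss--Manin transport in $Z_t$) agree in $H_d(Z_t)$, because both are restrictions of trivializations of $f^{-1}(B)\setminus\Dcal$ compatible near $\tau$. The fiberwise closedness of $\lambda_t$ then gives $J=I_a$ near $\tau$. Finally, $J$ is single valued on $\C$, so every continuation of $I_a$ along loops in $U$ equals $J$.

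\emph{Main obstacle.} The delicate point is Step 2: building a neighborhood of $\Dcal$ that is invariant under the lifted flows uniformly over all of $\C$, or at least over every compact disk. This neighborhood must also stay away from the critical points, which lie outside $\Dcal$ but may approach it as $|t|$ grows.

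Only local data over compact disks is actually needed. Holomorphy is local in $t$, and single-valuedness follows from path independence. So I would argue disk by disk. For each compact disk, the critical points over it form a compact set disjoint from $\Dcal$, so the neighborhood radius $\varepsilon$ can be chosen to avoid them. Changing the representative within a smaller neighborhood does not change the class, which resolves the shrinking issue and gives consistency on overlaps.
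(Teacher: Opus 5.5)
Your outline matches the paper's proof. You lift $a$ by Mayer--Vietoris into the punctured neighborhood of $D_\tau$, transport it in a region near $\Dcal$ where $f$ is submersive, prove holomorphy by Stokes--Morera with $f^*(dt)\wedge\lambda$, and glue over increasing disks centered at $\tau$ by the identity theorem. The gap is Step~2, which you flag but do not close, and the mechanism you propose for it does not work as stated.

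Over a disk $B$ that contains points of $\Sigma$, lifted vector fields exist only on the submersive locus, and that locus is not proper over $B$. Transport of $\gamma$ is therefore defined only if its trajectories cannot leave $\mathcal N$ and drift toward the critical points. Your invariance argument does not give this. After partition-of-unity patching, a lift coming from a chart with coordinates $u^\beta$ does not preserve $|u^\alpha_i|$. So neither the local level sets nor the boundary of $\bigcup\{\min_i|u_i|<\varepsilon\}$ is respected. Choosing $\varepsilon$ so that $\mathcal N$ misses the critical set says nothing about invariance. Consequently the claim that $(\mathcal N,\Dcal)$ is a fiber bundle over $\C$, or even over one disk meeting $\Sigma$, is unsupported.

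The paper supplies the missing ingredient as a single global function adapted to $\Dcal$. Take the canonical section $\sigma$ of $\mathcal O(\Dcal)$, a Hermitian metric, and set $\rho=\|\sigma\|^2$, which is locally $\vartheta\prod_{i\le r}|z_i|^2$. The estimate $|\partial\log\rho/\partial z_i|\ge|z_i|^{-1}-C$ in a fiber coordinate shows the following: for $\varepsilon$ small, depending on $\overline B$, $\rho$ has no vertical critical points on $\{0<\rho\le2\varepsilon\}$, and $f$ is smooth there. Then $E_B=\{\rho=\varepsilon\}\cap f^{-1}(B)\to B$ is a proper submersion, hence a trivial bundle by Ehresmann. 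The normalized vertical gradient flow of $\rho$ retracts $N_t\setminus D_t$ onto $E_t$, so $b$ lives in $H_d(E_\tau)$ and is transported inside $E_B$. Equivalently, lifts tangent to the level sets of $\rho$ make $\mathcal N=\{\rho<2\varepsilon\}$ genuinely invariant.

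You can also close the gap inside your own setup. Your lifts are tangent to $\Dcal$, and $\Dcal\cap f^{-1}(\overline B)$ is compact, so the flows starting on $D_\tau$ along every segment from $\tau$ in $\overline B$ exist. Openness of the flow domain then gives an open neighborhood $W$ of $D_\tau$ in $M_\tau$ whose trajectories stay in the submersive region and, by invariance of $\Dcal$, off $\Dcal$. Redoing Mayer--Vietoris with the cover $M_\tau=Z_\tau\cup W$ puts the representative of $a$ there. With either fix, the rest of your argument goes through as in the paper.
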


\begin{proof}
If $D_\tau$ is empty, then $j$ is the identity and $a=0$, so there is nothing to prove. Assume $D_\tau\ne\varnothing$.

We work over disks centered at $\tau$, allowing their radii to increase. Fix such a disk $B$ and work initially over a slightly larger disk. All compactness statements below are over $\overline B$.

\smallskip
\noindent\emph{A compact boundary family.}
We shall construct compact real hypersurfaces near $D_t$, but disjoint from it, which vary smoothly over $B$. These will carry the integration cycles; no extension of $\lambda$ across $\Dcal$ is required.
Local reduced defining equations of $\Dcal$ differ on overlaps
by nowhere-vanishing holomorphic factors. They therefore define
a line bundle $\mathcal O(\Dcal)$ and a section $\sigma$ locally
represented by these equations. Choose a smoothly varying
Hermitian norm on its one-dimensional fibers and set
$\rho=\|\sigma\|^2$. This gives a globally defined smooth
nonnegative function whose zero set is exactly $\Dcal$. In relative normal-crossing coordinates,
\begin{equation}\label{eq:rho-local}
 \rho=\vartheta(t,z)\prod_{i=1}^r|z_i|^2,
 \qquad \vartheta(t,z)>0.
\end{equation}
Here $\vartheta$ is smooth. A tangent vector is called vertical when it is tangent to a fiber, that is, when $t$ is fixed. Thus the coordinates other than $t$ are fiber coordinates. For a complex coordinate $z_i=x_i+\mathrm{i}y_i$, we use
$\partial/\partial z_i
=\frac12(\partial/\partial x_i-\mathrm{i}\partial/\partial y_i)$.  On the complement of the divisor,
\[
 \frac{\partial\log\rho}{\partial z_i}
 =\frac1{z_i}+\frac{\partial\log\vartheta}{\partial z_i},
 \qquad 1\le i\le r.
\]
On a relatively compact coordinate chart, the derivatives of $\log\vartheta$ are bounded, say by $C$. After shrinking about a point of the divisor, one of the vanishing coordinates satisfies $|z_i|<(2C+1)^{-1}$. At points off the divisor,
\[
 \left|\frac{\partial\log\rho}{\partial z_i}\right|
 \ge |z_i|^{-1}-C>0.
\]
Since $z_i$ is a fiber coordinate, $\rho$ has no vertical critical point there. Cover the compact set $\Dcal\cap f^{-1}(\overline B)$ by finitely many such neighborhoods on which $f$ is smooth. The complement of their union in $f^{-1}(\overline B)$ is compact and disjoint from $\Dcal$, so $\rho$ has a positive minimum there if the complement is nonempty. Choosing $\varepsilon>0$ sufficiently small, we obtain that $f$ is smooth on $\{\rho\le2\varepsilon\}$ over $\overline B$, and that $\rho$ has no vertical critical point where $0<\rho\le2\varepsilon$.

Consequently,
\[
 E_B=\{x\in f^{-1}(B):\rho(x)=\varepsilon\}
 \longrightarrow B
\]
is a proper smooth submersion. Its fibers $E_t$ are compact. To see surjectivity of its differential, lift a base tangent vector using the smooth map $f$ and correct the lift by a vertical vector on which $d\rho\ne0$.
After decreasing $\varepsilon$ if necessary, $E_\tau$ is nonempty:
by \eqref{eq:rho-local}, a fiber neighborhood of a point of $D_\tau$
contains points at every sufficiently small positive level of $\rho$. The image of the map $E_B\to B$ is nonempty, open and closed
in the connected disk $B$, so this map is surjective. Ehresmann's theorem \cite[Lemma~9.2]{KolarMichorSlovak} makes $E_B\to B$ a smooth fiber bundle. Since $B$ is a disk, its homology local system is trivial.

For
\[
 N_t=\{x\in M_t:\rho(x)<2\varepsilon\},
\]
the punctured neighborhood $N_t\setminus D_t$ retracts onto $E_t$. Choose a smooth Riemannian metric in the fiber directions,
and let $\nabla_{\mathrm{vert}}\rho$ denote the gradient of
$\rho|_{M_t}$. On $0<\rho<2\varepsilon$ over $B$, define
\[
 Y=\frac{\nabla_{\mathrm{vert}}\rho}
 {|\nabla_{\mathrm{vert}}\rho|^2},
\]
where the norm is taken with respect to the same metric.
If $\Phi_s$ denotes its local flow, then
$\rho(\Phi_s(x))=\rho(x)+s$. The required deformation retraction is
\[
H_u(x)=\Phi_{u(\varepsilon-\rho(x))}(x),
\qquad 0\le u\le1.
\] Each such trajectory stays in a compact strip separated from $D_t$, so no flow to level zero is required. This construction is compatible with the parameter.

\smallskip
\noindent\emph{Lifting the cycle to the boundary neighborhood.}
At the regular base point, apply the Mayer--Vietoris sequence for the open cover $M_\tau=Z_\tau\cup N_\tau$; see \cite[Section~2.2, pp.~149--150]{Hatcher}:
\begin{equation}\label{eq:mv}
 H_d(N_\tau\setminus D_\tau;\Q)
 \longrightarrow H_d(Z_\tau;\Q)\oplus H_d(N_\tau;\Q)
 \longrightarrow H_d(M_\tau;\Q).
\end{equation}
The first arrow is induced by the two inclusions of the
intersection, with opposite signs. The second adds the images
of the two classes in $H_d(M_\tau;\Q)$. Since $ja=0$, the pair
$(a,0)$ lies in the kernel of the second arrow. Exactness means
that this kernel is the image of the first arrow, so there is
a class
\[
 b\in H_d(N_\tau\setminus D_\tau;\Q)
 \simeq H_d(E_\tau;\Q)
\]
whose image in $H_d(Z_\tau;\Q)$ is $a$. This argument does not require $H_d(N_\tau;\Q)=0$.

Transport $b$ in the compact boundary family and denote the resulting class in $E_t$ by $b_t$. Define
\begin{equation}\label{eq:boundary-period}
 J_B(t)=\int_{b_t}\lambda_t.
\end{equation}
At a critical value, $\lambda_t$ in \eqref{eq:boundary-period} means the restriction of $\lambda$ to the smooth part of the fiber containing $E_t$. It is a closed top-degree holomorphic form there, so the integral depends only on the homology class $b_t$. Representatives can be chosen as finite smooth singular cycles varying locally smoothly in $t$. On a compact subdisk they lie in a common compact set disjoint from $\Dcal$. Thus $J_B$ is continuous, including at values where the interior of $M_t$ is singular. Near $\tau$, work in a disk contained in $U$.
The inclusions $E_t\hookrightarrow Z_t$ induce a map of homology
local systems: in local trivializations of the two families,
they form a continuous family of maps and hence induce the
same homology map. Thus $b_t$ maps to the Gauss--Manin
transport of $a$, and $J_B=I_a$ as germs.

\smallskip
\noindent\emph{Holomorphy across every finite parameter.}
The neighborhood containing $E_B$ is a complex manifold of dimension $d+1$ on which $f$ is a holomorphic submersion. There,
\[
 \omega=f^*(dt)\wedge\lambda
\]
is a holomorphic top-degree form and hence $d\omega=0$. Let $\Delta\subset B$ be a sufficiently small closed triangle. Choose $t_*\in\Delta$ and a finite piecewise smooth cycle
in $E_{t_*}$ representing $b_{t_*}$.
Trivialize the smooth boundary bundle over $\Delta$ and sweep
this cycle over the triangle. This gives a real $(d+2)$-chain
$C_\Delta$ in the complex neighborhood. Its boundary is the
cycle swept over $\partial\Delta$, since the chosen cycle
has zero boundary. Stokes' theorem gives, with consistent orientations,
\[
 \int_{\partial\Delta}J_B(t)\,dt
 =\int_{\partial C_\Delta}\omega
 =\int_{C_\Delta}d\omega=0.
\]
Morera's theorem implies that $J_B$ is holomorphic throughout $B$, including at the critical values of $f$. The chains in this calculation remain disjoint from any singularities of the interior fibers.

Repeat the construction on increasing disks centered at $\tau$, choosing $\varepsilon$ and the lift $b$ separately on each disk. Every resulting function has germ $I_a$ at $\tau$, so the identity theorem gives agreement on overlaps. They define an entire function extending $I_a$.
\end{proof}

\begin{corollary}\label{cor:entire-vanishing}
For the resolved pencil of Section~\ref{sec:pencil}, suppose a critical fiber has exactly one critical point and this point is nondegenerate. Let $\delta$ be its vanishing cycle. Suppose $d$ is odd and $\lambda$ is a holomorphic $d$-form on $\Zcal$. If $I_\delta$ is nonzero and has finite branching over $U$, then it extends to an entire function.
\end{corollary}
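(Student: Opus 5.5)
The corollary follows by chaining Proposition~\ref{prop:semisimplicity} with Proposition~\ref{prop:boundary-entire}. No new geometric input is needed; the work is in checking that the hypotheses of both propositions are met by the resolved pencil of Section~\ref{sec:pencil}, with the fixed critical value $t_0\in\Sigma$ and the vanishing class $\delta\in\mathsf A=H_d(Z_\tau;\Q)$ built from it.

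First, I would confirm the standing hypotheses used in Section~\ref{sec:semisimplicity}. By Lemma~\ref{lem:infinity-model}, $f:\Mcal\to\C$ is projective and $\Mcal$ is smooth. The set $\Sigma$ of critical values is finite, and $f$ is smooth over $U$, so the fibers $M_t$ with $t\in U$ are smooth projective. All critical points lie off $\Dcal$, as shown after \eqref{eq:base}. So the unique nondegenerate critical point in the fiber over $t_0$ lies away from $\Dcal$, which is exactly the setting of Lemma~\ref{lem:compatible-pl}. The form $\lambda$ is a holomorphic $d$-form on $\Zcal$, and its restrictions to smooth fibers are closed automatically, since a holomorphic $d$-form on a $d$-dimensional complex manifold has $d\lambda_t$ of type $(d+1,0)$, which is zero. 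Thus the periods $I_a$ are defined and holomorphic, as explained before Proposition~\ref{prop:semisimplicity}.

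Second, I would apply Proposition~\ref{prop:semisimplicity}. Since $d$ is odd, $I_\delta$ is not identically zero, and $I_\delta$ has finite branching on $U$, the proposition gives $j\delta=0$ in $\mathsf H=H_d(M_\tau;\Q)$.

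Third, I would apply Proposition~\ref{prop:boundary-entire} with $a=\delta$. Its hypotheses are all provided by Lemma~\ref{lem:infinity-model}: $f$ is projective from a smooth variety of dimension $d+1$, $\Dcal$ is a reduced divisor that is relative simple normal crossing over all of $\C$ with $f$ smooth near it, and $f$ is smooth over $U=\C\setminus\Sigma$ with $\Sigma$ finite. The remaining hypothesis $j\delta=0$ was obtained in the previous step. The proposition then extends the germ $I_\delta$ to a single-valued entire function. This germ is the same one whose continuations were counted in the finite-branching hypothesis, so the extension is of $I_\delta$ itself.

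I do not expect a genuine obstacle here. The only point needing care is bookkeeping: $\delta$ and $\tau$ in the corollary must be the same objects as in \eqref{eq:homology} and Lemma~\ref{lem:compatible-pl}, namely $\delta$ transported back to $\tau$ along the chosen path, so that both propositions refer to one class in $\mathsf A$. I would also note that the conclusion depends only on $I_\delta$ as a germ at $\tau$, so the choice of path does not matter.
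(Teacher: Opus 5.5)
Your proposal is correct and follows the paper's own argument step for step. You use closedness of $\lambda$ on smooth fibers by type, and Lemma~\ref{lem:infinity-model} to place the critical point off $\Dcal$ and supply the relative boundary over all of $\C$. You then apply Proposition~\ref{prop:semisimplicity} to get $j\delta=0$ and Proposition~\ref{prop:boundary-entire} to extend $I_\delta$ to an entire function.
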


\begin{proof}
For each $t\in U$, the restriction of $\lambda$ to $Z_t$
is a holomorphic $d$-form on a complex manifold of complex
dimension $d$, and is therefore closed. Lemma~\ref{lem:infinity-model} places the critical point away from $\Dcal$ and supplies the relative boundary over all finite parameters. Proposition~\ref{prop:semisimplicity} therefore gives $j\delta=0$, and Proposition~\ref{prop:boundary-entire} gives the extension.
\end{proof}

\section{Proof of the main theorems}\label{sec:main-proof}

\begin{proof}[Proof of Theorem~\ref{thm:section}]
Let $X$ be given by Lemma~\ref{lem:nash-boundary}, and make the translation \eqref{eq:origin}.
Remove from the set $\mathcal G$ of
Lemma~\ref{lem:good-real-directions} the directions for which
\eqref{eq:section-algebraic} specializes to the zero polynomial.
The excluded directions are the common zeros on $\Sph$ of
the coefficient polynomials of $Q$ when expanded in $t,w$.
Since $Q$ is not identically zero on $\Sph\times\R\times\R$,
this is a proper real algebraic subset and has empty interior.
The remaining set is dense, and we continue to denote it
by $\mathcal G$.
Fix $\xi\in\mathcal G$ and use the associated pencil
$f:\Mcal\to\C$ with regular base $U=\C\setminus\Sigma$.
The dependence of the pencil and of $\Sigma,U$ on $\xi$
is suppressed in the notation.

Choose oriented orthonormal coordinates $y_1,\ldots,y_{d+1}$ on $\xi^\perp$, extend them to $\R^n$ by orthogonal projection onto $\xi^\perp$, and then extend them complex-linearly to $\C^n$. Together with $t=\xi\cdot x$, they form linear coordinates on the ambient space. Define
\begin{equation}\label{eq:primitive}
 \Omega_\xi=dy_1\wedge\cdots\wedge dy_{d+1},
 \qquad
 \lambda_\xi=y_1\,dy_2\wedge\cdots\wedge dy_{d+1}.
\end{equation}
Thus $d\lambda_\xi=\Omega_\xi$. Pull these forms back along the composite $\Zcal\to M\setminus D_\infty\to X\subset\C^n$. They are holomorphic there, including along the affine exceptional set, since the map to the affine ambient space is regular. On every smooth complex $d$-dimensional fiber, $\lambda_\xi$ is closed.
We use the same notation for these forms and their pullbacks,
and take $\lambda=\lambda_\xi$ in the period construction of
Section~\ref{sec:semisimplicity}.

Let $t_+=b_K(\xi)$. For real $t<t_+$ sufficiently close to $t_+$, the entire section
\[
 C_{\xi,t}=K\cap\{x:\xi\cdot x=t\}
\]
lies in a small neighborhood of the unique support point. Indeed, otherwise a sequence of points outside that neighborhood would converge to another point in the support face. The real analytic Morse lemma gives coordinates $u_1,\ldots,u_{d+1}$ on the boundary near the support point in which the height is $t_+-\sum_{i=1}^{d+1}u_i^2$. At the support point, $dF\ne0$. The real and holomorphic implicit
function theorems identify $\partial K$ and $X_{\mathrm{reg}}$
locally with the real and complex graphs of the same analytic
function. The real analytic Morse coordinate change extends
holomorphically, and its nonzero Jacobian makes the extension
a local holomorphic coordinate change on $X_{\mathrm{reg}}$. For $t<t_+$ close enough, the entire boundary of $C_{\xi,t}$ is therefore the real sphere $\sum_i u_i^2=t_+-t$, with $u\in\R^{d+1}$, in this local complex fiber. It represents the vanishing cycle described in Section~\ref{sec:semisimplicity}. The resolution is an isomorphism on this neighborhood, so we denote the lifted sphere, with its boundary orientation, by $\delta_t$. Applying Stokes' theorem in the real section hyperplane and identifying its boundary with the lifted vanishing sphere gives
\begin{equation}\label{eq:real-period}
 A_K(\xi,t)=\int_{C_{\xi,t}}\Omega_\xi
 =\int_{\delta_t}\lambda_\xi>0.
\end{equation}
The specialized equation has positive degree in $w$, since otherwise a nonzero polynomial in $t$ would vanish on an interval. For the branching argument, let $B_\xi(t,w)$ be its monic square-free part in $w$ over $\C(t)$. Let $\mathcal E_\xi$ consist of its coefficient poles and the zeros, away from these poles, of $\Res_w(B_\xi,\partial_wB_\xi)$. This resultant is a nonzero rational function of $t$ by square-freeness, so $\mathcal E_\xi$ is finite; outside it the equation has only simple roots. Choose a real base point $\tau\in U\setminus\mathcal E_\xi$ in the above interval, and let $\delta$ be the homology class of $\delta_\tau$. The period $I_\delta$ is nonzero by \eqref{eq:real-period}.

The local triviality of the pair established after \eqref{eq:fibres} supplies homological transport along every path in $U$. The holomorphy of periods established in Section~\ref{sec:semisimplicity} then supplies analytic continuation of $I_\delta$ along each such path. Every returning germ still satisfies the original specialized
polynomial relation. Taking the square-free part over $\C(t)$
does not change its solutions as meromorphic germs, so each
returning germ also satisfies $B_\xi(t,w)=0$.
At $\tau$, this equation has only finitely many simple local
roots, so there are only finitely many such germs. Thus $I_\delta$ has finite branching over $U$. The set $\mathcal E_\xi$ is used only to choose the base point; it need not be removed from the geometric base $U$. Corollary~\ref{cor:entire-vanishing} now gives an entire extension.

By Lemma~\ref{lem:real-analytic-sections}, the actual section function is real analytic on the whole interval $I_K(\xi)$. Equation~\eqref{eq:real-period} and the identity theorem therefore give
\begin{equation}\label{eq:real-identification}
 A_K(\xi,t)=I_\delta(t)
 \qquad(t\in I_K(\xi)).
\end{equation}
This argument does not require transporting the actual real cycle through singular fibers of the complex algebraic family.

We next obtain a degree bound valid for all the directions under consideration. For this purpose, return to the original joint equation, rather than the square-free equation chosen for one direction. Write it in the form
\[
 Q(\xi,t,w)=\sum_{j=0}^{r}p_j(\xi,t)w^j,
 \qquad N_0=\max_{0\le j\le r}\deg_t p_j.
\]
Zero coefficients are omitted when taking the maximum defining $N_0$. This number is independent of $\xi$. By the identity theorem, the entire extension still satisfies $Q(\xi,t,I_\delta(t))=0$ for every $t\in\C$. For a fixed $\xi\in\mathcal G$, discard coefficients which vanish identically in $t$, and let $p_{r_\xi}(\xi,t)$ be the highest remaining coefficient in $w$. As noted above, $r_\xi\ge1$. The root bound for a polynomial gives, for all sufficiently large $|t|$,
\begin{equation}\label{eq:growth}
 |I_\delta(t)|
 \le 1+\max_{j<r_\xi}
 \left|\frac{p_j(\xi,t)}{p_{r_\xi}(\xi,t)}\right|
 \le C_\xi(1+|t|)^{N_0}.
\end{equation}
Here $C_\xi>0$ may depend on $\xi$. The leading coefficient has no zeros for sufficiently large $|t|$, and each quotient in \eqref{eq:growth} has growth at most $|t|^{N_0}$. Cauchy's estimates show that the entire function $I_\delta$ is a polynomial of degree at most $N_0$.

It follows from \eqref{eq:real-identification} that
\[
 \partial_t^j A_K(\xi,0)=0
 \qquad(\xi\in\mathcal G,\ j>N_0).
\]
For each fixed $j$, Lemma~\ref{lem:real-analytic-sections}
makes $\xi\mapsto\partial_t^jA_K(\xi,0)$ continuous.
Density of $\mathcal G$ therefore extends these identities
to every $\xi\in\Sph$. For each such $\xi$, real analyticity
in $t$ implies that $A_K(\xi,t)$ agrees near $0$ with its
Taylor polynomial of degree at most $N_0$. The identity
theorem extends this equality to all of $I_K(\xi)$.
Thus $K$ is polynomially integrable, with a uniform degree bound.
In particular, all central derivatives of every integer order
at least $N=\max\{n,N_0+1\}$ vanish in every direction.
Theorem~3.7 of \cite{KMY} now shows that $K$ is an ellipsoid.
\end{proof}

\begin{proof}[Proof of Theorem~\ref{thm:main}]
If $K$ is algebraically integrable, Lemma~\ref{lem:algebraization} supplies the hypotheses of Theorem~\ref{thm:section}. Hence $K$ is an ellipsoid.

Conversely, let
\[
 E=c+B^{1/2}B_2^n,
\]
where $c\in\R^n$ and $B$ is a positive definite real symmetric matrix.
Here $B^{1/2}$ denotes the positive definite symmetric square
root of $B$. Put $q(\xi)=\xi^TB\xi$. For internal sections,
\begin{equation}\label{eq:ellipsoid}
 A_E(\xi,t)=
 \frac{\omega_{n-1}\sqrt{\det B}}{\sqrt{q(\xi)}}
 \left(1-\frac{(t-c\cdot\xi)^2}{q(\xi)}\right)^m.
\end{equation}
Since $m=(n-1)/2$ is an integer, this is a polynomial in $t$ whose coefficients are algebraic functions of $\xi$. Integrating it from $c\cdot\xi-\sqrt{q(\xi)}$ to $t$ gives an algebraic cutoff volume. The substitution $\xi=\eta/|\eta|$, $t=s/|\eta|$ preserves algebraicity and shows that $E$ satisfies Definition~\ref{def:integrable}.
\end{proof}

\section{A special projection and the boundary at infinity}\label{sec:example}

The conclusion $j\delta=0$, taken by itself, does not imply that a finitely branched period is polynomial. The following example illustrates why the boundary hypothesis in Proposition~\ref{prop:boundary-entire} is needed.

\begin{proposition}\label{prop:special-projection}
There is a smooth affine surface with a polynomial projection having a nondegenerate critical point and a nonzero algebraic vanishing period which is not polynomial, although the vanishing cycle is zero in the first homology of a smooth projective compactification of a general fiber.
\end{proposition}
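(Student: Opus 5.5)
The plan is to build the simplest possible example in which every hypothesis used in Section~\ref{sec:semisimplicity} holds, but the boundary hypothesis of Proposition~\ref{prop:boundary-entire} fails at one finite parameter. I will take the affine surface
\[
Z=\{(x,y,w)\in\C^3:\ w(1-x)=1\}\cong\{(x,y)\in\C^2:\ x\ne1\},
\]
the projection $f(x,y,w)=x^2+y^2$, and the holomorphic $1$-form $\lambda=y\,dx/(1-x)=yw\,dx$, which is regular on $Z$. The surface $Z$ is smooth because $\partial_w\bigl(w(1-x)-1\bigr)=1-x\ne0$ on $Z$. The only critical point of $f$ on $Z$ is the origin, with Hessian $2\,\mathrm{Id}$, so it is nondegenerate. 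For $t\ne0$ the fiber $Z_t$ is the conic $x^2+y^2=t$, which is isomorphic to $\C^*$, with the points where $x=1$, namely $y=\pm\sqrt{t-1}$, removed. Its smooth projective compactification is $\Pj^1$, so $H_1(\Pj^1;\Q)=0$. Hence the vanishing cycle $\delta_t$, the real circle $x^2+y^2=t$ for small real $t>0$, maps to zero in compact homology, exactly as in \eqref{eq:null-compact}.

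Next I compute the period. For small real $t>0$, Stokes' theorem in the real plane gives
\[
I_\delta(t)=\int_{\delta_t}\frac{y\,dx}{1-x}=\pm\iint_{x^2+y^2\le t}\frac{dx\,dy}{1-x},
\]
since $d\lambda=-dx\wedge dy/(1-x)$ up to orientation. Integrating in $y$ first reduces this to $\int_{-r}^{r}2\sqrt{r^2-x^2}\,(1-x)^{-1}dx$ with $r=\sqrt t$. I will evaluate this with the classical formula $\int_{-r}^{r}\sqrt{r^2-x^2}\,(a-x)^{-1}dx=\pi\bigl(a-\sqrt{a^2-r^2}\bigr)$ for $a>r$, which follows from the substitution $x=r\cos\theta$ and a residue computation. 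This gives
\[
I_\delta(t)=\pm2\pi\bigl(1-\sqrt{1-t}\bigr).
\]
This function is nonzero and algebraic, since it satisfies $(2\pi\mp I)^2=4\pi^2(1-t)$, so it has finite branching. It is not a polynomial, because it is branched at $t=1$.

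Finally I will explain the failure of Proposition~\ref{prop:boundary-entire}. Compactify $Z$ fiberwise inside $\Pj^1$-bundles, for instance by taking the closure in $\Pj^2\times\C$ together with the line $x=1$. The boundary divisor in each fiber consists of the two points at infinity of the conic and the two points $(1,\pm\sqrt{t-1})$. These last two points collide at $t=1$, where the line $x=1$ is tangent to the conic. So at $t=1$ the boundary is not relative simple normal crossing, and $f$ is not smooth near it. This accounts for the branch point of $I_\delta$ at the finite parameter $t=1$.

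The main obstacle is bookkeeping rather than depth. I need to confirm the sign and orientation conventions so that the period is genuinely nonzero. I also need to check that the evaluation of the integral is correct. Once that is settled, the contrast between $j\delta=0$ and the non-polynomial period follows immediately.
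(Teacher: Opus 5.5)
Your example is correct and proves the statement, but it is a different example from the paper's. The paper takes the graph $X_0=\{w=(1+u^2+v^2)^2\}\cong\C^2$ with $f_0=w$. There the origin is a nondegenerate critical point with value $1$, and the period of $u\,dv$ is $\pi(\sqrt t-1)$. A general fiber is the disjoint union of the conics $u^2+v^2=-1\pm\sqrt t$, each compactifying to $\Pj^1$. The branching sits at the critical value $t=0$: the critical locus $1+u^2+v^2=0$ is a noncompact curve, and monodromy exchanges the two components and permutes their four points at infinity.

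Your example instead has a single Morse critical point, connected fibers, and no other critical value on $Z$. The branch point of $2\pi\bigl(1-\sqrt{1-t}\bigr)$ sits at the noncritical value $t=1$, where the deleted line $x=1$ (which is at infinity for the closed surface $Z\subset\C^3$) becomes tangent to the fiber. Your value of the period is right, since $\int_0^{2\pi}r^2\sin^2\theta\,(1-r\cos\theta)^{-1}\,d\theta=2\pi\bigl(1-\sqrt{1-r^2}\bigr)$. The paper's version stays closer to the affine-hypersurface setting of Section~\ref{sec:pencil}, presenting the failure as a special projection of a graph over $\C^2$. Yours isolates the boundary hypothesis more precisely.

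One side remark of yours is wrong and should be fixed. In the compactification you describe, $f$ \emph{is} smooth near the boundary at $t=1$. In the chart $z=1$ the total space is the graph $t=x^2+y^2$, and $df=2x\,dx+2y\,dy=2\,dx\ne0$ at $(1,0)$. What fails is only that the boundary component $\{x=z\}$, on which $t=1+y^2$, is not submersive over the base at $y=0$. Equivalently, the punctures $(1,\pm\sqrt{t-1})$ collide at $t=1$ and are exchanged by monodromy around it, so no relative simple normal-crossing boundary exists over a disk about $1$.

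The two boundary components $\{z=0\}$ and $\{x=z\}$ are disjoint smooth curves, and $f$ is a submersion near both. So, correctly stated, your example shows that the relative normal-crossing condition in Proposition~\ref{prop:boundary-entire} cannot be weakened to smoothness of $f$ near $\Dcal$. The paper's example does not separate these two conditions: its unbounded critical curve $1+u^2+v^2=0$ forces $f_0$ to be critical at boundary points over $t=0$ in any proper compactification.
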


\begin{proof}
Consider the smooth graph
\[
 X_0=\{(u,v,w)\in\C^3:w=(1+u^2+v^2)^2\}
\]
and the projection $f_0=w$. In its global coordinates $(u,v)$,
\[
 f_0=1+2u^2+2v^2+(u^2+v^2)^2,
\]
so $(0,0)$ is a nondegenerate critical point with critical value $1$. The other critical locus is $1+u^2+v^2=0$, with critical value $0$. For real $t>1$ close to $1$, consider the disk in the real coordinate plane
\[
 \{(u,v)\in\R^2:u^2+v^2\le\sqrt t-1\}.
\]
Its boundary lies in $f_0^{-1}(t)$ and is the local vanishing sphere.
With $\lambda_0=u\,dv$, its boundary period is
\begin{equation}\label{eq:sqrt-example}
 I(t)=\pi(\sqrt t-1).
\end{equation}
It has two branches and is not a polynomial.

For $t\in\C\setminus\{0,1\}$, the affine fiber is the disjoint union of the two smooth conics
\[
 u^2+v^2=-1+\sqrt t,
 \qquad u^2+v^2=-1-\sqrt t.
\]
Each has smooth projective compactification $\Pj^1$, so the image of the vanishing cycle in compact first homology is zero. These compactifications are taken separately on the two
components, and each affine conic acquires two points at infinity. Around $t=0$, analytic continuation exchanges the two
components and induces a nontrivial permutation of their
four points at infinity. The local system formed by the
zeroth homology of these points therefore cannot extend
across $t=0$ as a locally constant system. Thus this special projection cannot have the relative boundary property in Proposition~\ref{prop:boundary-entire}. It does not contradict the generic-direction construction in Lemma~\ref{lem:infinity-model}.
\end{proof}

\begin{remark}
The odd-dimensional hypothesis in Theorem~\ref{thm:section} is essential. In the plane, the convex body
\[
 \{(x_1,x_2)\in\R^2:x_1^4+x_2^4\le1\}
\]
has smooth semialgebraic boundary and is not an ellipse. For $\xi=(\xi_1,\xi_2)\in S^1$, parametrize a line by
\[
 x=t\xi+u(-\xi_2,\xi_1).
\]
Substitution in $x_1^4+x_2^4=1$ gives a polynomial in $u$ with coefficients polynomial in $(\xi,t)$ and leading coefficient $\xi_1^4+\xi_2^4>0$. The chord length is the difference of its two real endpoint roots. To obtain a joint equation, take the monic polynomial whose roots are all ordered differences of the four complex roots. Its coefficients are symmetric polynomials in those roots, hence rational functions of the original coefficients. Clearing denominators gives a polynomial equation that is nontrivial on $S^1\times\R\times\R$ and is satisfied by every internal chord length. This does not give algebraicity of the cutoff area and does not contradict the even-dimensional obstruction of \cite{Vassiliev2015}.
\end{remark}

\par\medskip
\noindent\textbf{Declaration of AI use.}
The initial draft of this manuscript, including its mathematical
arguments, was generated entirely by ChatGPT.
The author has conducted a preliminary review and made revisions,
but has not independently verified all the proofs.
This manuscript is made available for the dissemination and discussion
of mathematical ideas and is not intended for submission to a journal.

\end{document}